\def\lprod{{>\!\!\!\triangleleft\kern-.33em\ \, }}
\def\C{\mathbb{C}}
\def\g{{\mathfrak{g}}}
\def\gl{{\mathfrak{gl}}}
\def\A{{\cal A}}
\def\L{{\cal L}}
\def\E{{\cal E}}
\def\M{{\cal M}}
\def\C{{\cal C}}
\def\P{{\cal P}}
\def\V{{\cal V}}
\def\cal#1{\mathcal{#1}}
\newcommand{\id}{\operatorname{id}}
\newcommand{\Der}{\operatorname{Der}}
\newtheorem{theorem}{Theorem}[section]
\newtheorem{proposition}[theorem]{Proposition}
\newtheorem{definition}[theorem]{Definition}
\newtheorem{lemma}[theorem]{Lemma}
\newtheorem{example}[theorem]{Example}
\newcommand{\pf}{\noindent{{\bf Proof.}\ \ }}
\def\qed{\hfill {$\Box$} \\[-.5em]}
\begin{document}

\title{Cohomology and crossed modules extension of Hom-Leibniz-Rinehart algebras}

\author[Y. Bi]{Yanhui Bi}
\address{Center for Mathematical Sciences, College of Mathematics and Information Science\\
Nanchang Hangkong University\\
Jiangxi 330063, PR China}
\email{biyanhui0523@163.com}

\author[D. Chen]{Danlu Chen}
\address{College of Mathematics and Information Science\\
Nanchang Hangkong University\\
Jiangxi 330063, PR China}
\email{danluchen1008@163.com}

\author[T. Zhang]{Tao Zhang}
\address{College of Mathematics and Information Science\\
Henan Normal University\\
Xinxiang 453007, PR China}
\email{zhangtao@htu.edu.cn}
\thanks{Corresponding author: zhangtao@htu.edu.cn}

\date{}

\begin{abstract}
In this paper, we introduce the concept of crossed module for Hom-Leibniz-Rinehart algebras.
We study the cohomology and extension theory of Hom-Leibniz-Rinehart algebras.
It is proved that there is one-to-one correspondence between equivalence classes of abelian extensions of Hom-Leibniz-Rinehart algebras and the elements of second cohomology group.
Furthermore, we prove that there is a natural map from $\alpha$-crossed modules extension of Hom-Leibniz-Rinehart algebras to the third cohomology group of Hom-Leibniz-Rinehart algebras.
\end{abstract}

\subjclass[2010]{Primary 18D40; Secondary 17A32}

\keywords{Hom-Leibniz-Rinehart algebra, crossed module, cohomology}

\maketitle

\section{Introduction}

Leibniz algebras are fundamental structures in mathematics and physics. The notion of Leibniz algebras was first proposed by Blokh \cite{Blo} in 1965 under the name $D$-algebras as a natural generalization of Lie algebras. Leibniz algebras later were rediscovered by Loday \cite{Lod1} who called them Leibniz algebras as noncommutative analogues of Lie algebras. In the past two decades the theory of Leibniz algebras has been extensively studied
and many results of Lie algebras have been extended to Leibniz algebras, such as, classical results on Cartan subalgebras \cite{Cas,Alb1,Omir1}. Up until to now the Levi's theorem for Leibniz algebras \cite{Bar}, the representation, homology and cohomology of Leibniz algebras \cite{Lod2,CP} have been studied by many people, see also \cite{Gao1999,Gao2000,Wang,Liu}.

As an algebraic counterpart of Lie algebroid, the concept of Lie-Rinehart algebra was introduced by Herz \cite{Herz}, Palais \cite{Palais} and Rinehart \cite{Rinehart} under the name of $(R,C)$-Lie algebra, differential Lie algebra, Lie pseudoalgebra and so on. See also \cite{Huebschmann,Mackenzie} and the reference therein.
In \cite{CLP}, the notion of crossed modules for Lie-Rinehart algebras was introduced and studied by using the cohomology theory of Lie-Rinehart algebras.
In \cite{Alp}, a new characterization of crossed modules for Lie-Rinehart algebras by using cat${^1}$-Lie-Rinehart algebras was given.

On the other hand, mathematicians have an increasing interest in the studies of crossed module and Hom-structure in recent years.
The notion of Hom-Leibniz algebras is a natural generalization of Leibniz algebras and Hom-Lie algebras, see \cite{Makhlouf2010}.
For other Hom-types of algebras include Hom-associative algebras, Hom-Nambu-Lie algebras, Hom-Hopf algebras, Hom-Poisson algebras, Hom-Lie-Yamaguti algebras, see \cite{Makhlouf,Sheng,Yau}.
Very recently, Hom-Lie algebroids and Hom-Lie-Rinehart algebra were studied in \cite{Camille} and \cite{Mandal}.
Crossed modules for Hom-Lie-Rinehart algebras were studied by the first and third author in \cite{Zhang1}.
Categorical properties of  crossed module of algebroid were studied by M. Alp in \cite{Alp1,Alp2}.
Crossed modules for Hom-Lie antialgebras were studied in \cite{Zhang2}.

In this paper, we introduce the concept of crossed modules for Hom-Leibniz-Rinehart algebras and cat${^1}$-Hom-Leibniz-Rinehart algebras.
We give a detailed study on its construction from the Hom-actions and semi-direct products of Hom-Leibniz-Rinehart algebras.
It is proved that there is a one-to-one correspondence between crossed modules of Hom-Leibniz-Rinehart algebras and cat$^1$-Hom-Leibniz-Rinehart algebras.
Finally, we study the cohomology and crossed module extension theory of Hom-Leibniz-Rinehart algebras.

The paper is organized as follows. In section 2,  we review some basic notions about Hom-Leibniz algebras and Hom-Leibniz-Rinehart algebras.
In Section 3, we introduce the notion of crossed modules for Hom-Leibniz-Rinehart algebras and construct Hom-Leibniz-Rinehart algebras from the Hom-actions and semi-direct products.
In Section 4, we define the concept of cat$^1$-Hom-Leibniz-Rinehart algebras and give its relationship with crossed modules of Hom-Leibniz-Rinehart algebras.
In Section 5,  we study the cohomology theory of Hom-Leibniz-Rinehart algebras. It is proved that there is one-to-one correspondence between equivalence classes of abelian extensions of Hom-Leibniz-Rinehart algebras and the elements of second cohomology group.
In Section 6,  the relationship between the third cohomology group and the $\alpha$-crossed module extensions theory of Hom-Leibniz-Rinehart algebras  are investigated.

\section{Preliminaries}
Let us recall some terminology and notions used in the paper.
We mainly follow \cite{Casas,Hartwig,Makhlouf,Mandal,Sheng,Yau,Makhlouf2010}.

\begin{definition} \cite{Makhlouf2010} 
A Hom-Leibniz algebra is a triple $(\L, [\cdot,\cdot]_\L, \alpha_\L)$ where $\L$ a vector space equipped with
a bilinear map $[\cdot,\cdot]_\L:\L\otimes \L\to\L$  and a linear map $\alpha_\L:\L\to \L$ satisfying the following
hom-Jacobi identity:
\begin{equation}
[\alpha_\L(x),[y,z]_\L]_\L=[[x,y]_\L,\alpha_\L(z)]_\L+[\alpha_\L(y),[x,z]_\L]_\L,
\end{equation}
for all $x,y,z\in\L$.
A Hom-Leibniz algebra is called multiplicative if $\alpha$ is an algebraic homomorphism, i.e. $\alpha([x,y]_\L)= [\alpha(x), \alpha(y)]_\L.$
A Hom-Leibniz algebra is called regularif $\alpha$ is an algebraic automorphism.

A homomorphism between two Hom-Leibniz algebras $(\L,[\cdot,\cdot]_\L,\alpha_\L)$ and $(\L',[\cdot,\cdot]_{\L'},\alpha_{\L'})$ is a linear map $\phi:\L\to\L'$ such that
\begin{equation}
\phi(\alpha_\L(x))=\alpha_{\L'}(\phi(x)),\ \ \ \phi([x,y]_\L)=[\phi(x),\phi(y)]_{\L'},
\end{equation}
for all $x,y\in\L.$
\end{definition}

This is in fact the definition of a left Hom-Leibniz algebra. The dual notion of right Hom-Leibniz algebra is made out of the  relation $[[x, y]_\L, \alpha_\L(z)] = [\alpha_\L(x), [y, z]_\L]_\L - [\alpha_\L(y), [x, z]_\L]_\L$, for all $x, y, z \in \L$.
In this paper, we are mainly considering left Hom-Leibniz algebras and all Hom-Leibniz algebras are assumed to be multiplicative and regular.


\begin{definition} 
 A representation of a Hom-Leibniz algebra $(\L, [\cdot,\cdot]_\L, \alpha_\L)$ on a Hom-vector space $(V,\alpha_V)$ is a pair $(\rho^{L} ,\rho^{R})$ where $\rho^{L} , \rho^{R}:\L\to \gl(V)$  such that the following conditions hold: $\forall x,y\in\L$,
\begin{eqnarray}
\rho^{L} (\alpha_\L(x))\circ\alpha_V&=&\alpha_V\circ\rho^{L} ,\\
\rho^{R}(\alpha_\L(x))\circ\alpha_V&=&\alpha_V\circ\rho^{R},\\
\rho^{L} ([x,y])\circ\alpha_V&=&\rho^{L} (\alpha_\L(x))\circ\rho^{L} (y)-\rho^{L} (\alpha_\L(y))\circ\rho^{L} (x),\\
\rho^{R}([x,y])\circ\alpha_V&=&\rho^{L} (\alpha_\L(x))\circ\rho^{R}(y)-\rho^{R}(\alpha_\L(y))\circ\rho^{L} (x),\\
\rho^{R}(\alpha_\L(y))\circ\rho^{R}(x)&=&-\rho^{R}(\alpha_\L(y))\circ\rho^{L} (x).
\end{eqnarray}
\end{definition}

\begin{definition}\cite{Casas} 
Let $({\L},\alpha_{\L})$ and $({\M},\alpha_{\M})$ be Hom-Leibniz algebras. A Hom-action of $({\L},\alpha_{\L})$ on $({\M},\alpha_{\M})$ is a pair of linear maps ${\L}\otimes {\M}\to {\M}, (x, m)\mapsto [x,m]$ and ${\M}\otimes {\L}\to {\M}$, $(m, x)\mapsto [m,x],$ satisfying the following properties:
\begin{enumerate}
\item[(A11)] $[\alpha_{\M}(m),[x,y]]=[\alpha_{\L}(x),[m,y]]-[[x,m],\alpha_{\L}(y)],$
\item[(A12)] $[\alpha_{\L}(x),[m,y]]=[\alpha_{\M}(m),[x,y]]-[[m,x],\alpha_{\L}(y)],$
\item[(A13)] $[\alpha_{\L}(x),[y,m]]=[[x,y],\alpha_{\M}(m)]+[\alpha_{\L}(y),[x,m]],$
\item[(A21)] $[\alpha_{\L}(x),[m,m']]=[\alpha_{\M}(m),[x,m']]-[[m,x],\alpha_{\M}(m')],$
\item[(A22)] $[\alpha_{\M}(m),[x,m']]=[\alpha_{\L}(x),[m,m']]-[[x,m],\alpha_{\M}(m')],$
\item[(A23)] $[\alpha_{\M}(m),[m',x]]=[[m,m'],\alpha_{\L}(x)]+[\alpha_{\M}(m'),[m,x]],$
\item[(A31)] $\alpha_{\M}([x,m])=[{\alpha_{\L}(x)},\alpha_{\M}(m)],$
\item[(A32)] $\alpha_{\M}([m,x])=[\alpha_{\M}(m),{\alpha_{\L}(x)}].$
\end{enumerate}
\noindent for all $x,y\in {\L}$ and $m,m'\in {\M}.$
The Hom-action is called trivial if $[x,m]=0$ and $[m,x]=0$ for all $x\in {\L}$ and $m\in {\M}.$
\end{definition}

\begin{definition} 
Given an algebraic homomorphism $\varphi$ of an associative commutative algebra $\A$, we call $\delta:\A\to \A$ a $\varphi$-derivation if it satisfies
\begin{equation}
\delta(fg)=\varphi(f)\delta(g)+\varphi(g)\delta(f)
\end{equation}
for all $f,g\in \A.$
\end{definition}

Let $\A$ be an associative commutative algebra over a field $\mathbb{K}$,
we denote $\Der_{\varphi}\A$ the set of  all the $\varphi$-derivations of $\A$.

\begin{definition}
A Hom-Leibniz-Rinehart algebra over $(\A,\varphi)$ is a tuple $(\L, [\cdot, \cdot]_\L, \alpha_\L, \rho^{L}_\L,\rho^{R}_{\L})$, where $\A$ is an associative commutative algebra, $\varphi:\A\to \A$ is an algebraic homomorphism,
$\L$ is an $\A$-module, $(\L, [\cdot, \cdot]_\L, \alpha_\L)$ is a Hom-Leibniz algebra, together with the maps $\rho^{L}_\L,\rho^{R}_{\L}:\L\to \Der_{\varphi}\A$ (called left and right anchor), such that the  following compatible conditions are satisfied:
\begin{enumerate}
\item[(H01)] $\alpha_\L(fx)=\varphi(f)\alpha_\L(x),$
\item[(H11)] $\rho^{L}_\L(\alpha_\L(x))\circ\varphi(f)=\varphi\circ\rho^{L}_\L(x)(f),$
\item[(H12)] $\rho^{R}_{\L}(\alpha_\L(x))\circ\varphi(f)=\varphi\circ\rho^{R}_{\L}(x)(f),$
\item[(H21)] $\rho^{R}_{\L}(\alpha_\L(y))\circ\rho^{R}_{\L}(x)(f)=-\rho^{R}_{\L}(\alpha_\L(y))\circ\rho^{L}_\L(x)(f),$
\item[(H22)] $\rho^{L}_\L([x,y])\circ\varphi(f)=\rho^{L}_\L(\alpha_\L(x))\circ\rho^{L}_\L(y)(f)-\rho^{L}_\L(\alpha_\L(y))\circ\rho^{L}_\L(x)(f),$
\item[(H23)] $\rho^{R}_{\L}([x,y])\circ\varphi(f)=\rho^{L}_\L(\alpha_\L(x))\circ\rho^{R}_{\L}(y)(f)-\rho^{R}_{\L}(\alpha_\L(y))\circ\rho^{L}_\L(x)(f),$
\item[(H31)] $[x, fy]=\varphi(f)[x, y]+\rho^{L}_\L(x)(f)\alpha_\L(y),$
\item[(H32)] $[fx, y]=\varphi(f)[x, y]-\rho^{R}_{\L}(y)(f)\alpha_\L(x),$
\item[(H41)] $\rho^{L}_\L(fx)=\varphi(f)\rho^{L}_\L(x),$
\item[(H42)] $\rho^{R}_{\L}(fx)=\varphi(f)\rho^{R}_{\L}(x),$
\end{enumerate}
where $f\in \A$ and $x, y\in \L$.
We will denote a Hom-Leibniz-Rinehart algebra  by $(\L, [\cdot, \cdot]_\L, \alpha_\L, \rho^{L}_\L,\rho^{R}_{\L})$ or simply $\L$.
\end{definition}
Note that from (H11)-(H23) we know that $(\rho^{L}_\L,\rho^{R}_{\L})$ is a representation of Hom-Leibniz algebra $(\L, [\cdot, \cdot]_\L, \alpha_\L)$ over $(\A,\varphi)$.
We remark that our definition of Hom-Leibniz-Rinehart algebra  is different from \cite{Guo} since in that paper the conditions (H12), (H21), (H23), (H32) and (H42) are missed so they only consider the special case $\rho^{R}_{\L}=0$.

\begin{example}
If $\rho^{L}_\L=0=\rho^{R}_{\L}$, then a Hom-Leibniz-Rinehart algebra $(\L, [\cdot, \cdot]_\L, \alpha_\L, \rho^{L}_\L,\rho^{R}_{\L})$ is a Hom-Leibniz $\A$-algebra. If $\phi=\id, \alpha_\L=\id$, then a Hom-Leibniz-Rinehart algebra is reduced to a Leibniz-Rinehart algebra or Loday QD-Rinehart algebra defined in \cite{Cas20}.
\end{example}
\begin{example}
If we consider a Loday QD-Rinehart algebra $\L$ over $\A$ along with an endomorphism
$$
(\alpha_{\L},\varphi ): (\L, \A) \rightarrow(\L, \A)
$$
in the category of Leibniz-Rinehart algebras, then we get a Hom-Leibniz-Rinehart algebra $(\L, \A)$ as follows:
 $$[x, y]_{\L}=\alpha_{\L}([x, y]) ,$$
 $$\rho^{L}_\L(x)(f)=\varphi(\rho^{L}_\L(x)(f)) ,$$
 $$\rho^{R}_{\L}(x)(f)=\varphi(\rho^{R}_{\L}(x)(f)) ,$$
for all $x, y \in \L ,f \in \A$.
\end{example}

\begin{definition}\label{morphism1} 
Let  $(\L, \A)$ and  $(\L', \A)$ be two Hom-Leibniz-Rinehart algebra over $(\A,\varphi)$.
Then a homomorphism $\Phi:\L\to \L'$ is an $\A$-module homomorphism such that the following conditions are satisfied:
\begin{eqnarray}
\Phi\circ\alpha_{\L}&=&\alpha_{\L'}\circ\Phi,\quad \Phi([x,y]_{\L})=[\Phi(x),\Phi(y)]_{\L'},\\
\rho^{L}_{\L'}\circ\Phi&=&\rho^{L}_\L,\quad \rho^{R}_{\L'}\circ\Phi=\rho^{R}_{\L},
\end {eqnarray}
for all $x, y \in \L ,f \in \A$.
\end{definition}
Note that the first two conditions mean that $\Phi:\L\to \L'$ is a Hom-Leibniz algebra homomorphism,
and the last two conditions mean that the anchors of $\L$ are preserved by $\Phi$.


\section{Crossed modules of Hom-Leibniz-Rinehart algebras}
In this section, we introduce the concept of crossed module for Hom-Leibniz-Rinehart algebras.
First let us define the Hom-actions and semi-direct product of Hom-Leibniz-Rinehart algebras.

\begin{definition}\label{semi} 
Let $(\L,[\cdot,\cdot]_\L, \alpha_\L,\rho^{L}_\L,\rho^{R}_{\L})$ be a Hom-Leibniz-Rinehart algebra
and $(\M,[\cdot,\cdot]_\M,\alpha_\M)$ be a Hom-Leibniz algebra which is also an $\A$-module  (called Hom-Leibniz $\A$-algebra).
We will say that $\L$ acts on $\M$ if there is a pair of $\mathbb{K}$-linear maps
$$\L\otimes\M\to\M,\ \ \ (x,m)\mapsto[x,m],\ \ \ x\in\L,m\in\M,$$
$$\M\otimes\L\to\M,\ \ \ (m,x)\mapsto[m,x],\ \ \ x\in\L,m\in\M,$$
such that the following identities hold:
\begin{enumerate}
\item[(S11)] $\L$ acts on $\M$ as a Hom-Leibniz algebra,
\item[(S21)] $[fx,m]=\varphi(f)[x,m],$
\item[(S22)] $[m,fx]=\varphi(f)[m,x],$
\item[(S31)] $[x,fm]=\varphi(f)[x,m]+\rho_\L^{L}(x)(f)\alpha_\M(m),$
\item[(S32)] $[fm,x]=\varphi(f)[m,x]-\rho_\L^{R}(x)(f)\alpha_\M(m).$
\end{enumerate}
\end{definition}
Now for a Hom-Leibniz-Rinehart algebra $(\L,[\cdot,\cdot]_\L, \alpha_\L,\rho^{L}_\L,\rho^{R}_{\L})$ and a Hom-Leibniz $\A$-algebra $(\M,[\cdot,\cdot]_\M,\alpha_\M)$ where $\L$ acts on it. We consider direct sum $\L\oplus\M$ as an $\A$-module: $f(x,m)=(fx,fm).$
Define the anchors $\rho^{L}_\L,\rho^{R}_{\L}:\L\oplus\M\to \Der_\varphi(\A)$:
\begin{equation}
\rho^{L}_\L(x,m)=\rho^{L}_\L(x),\quad\rho^{R}_{\L}(x,m)=\rho^{R}_{\L}(x),
\end{equation}
the map $\alpha:\L\oplus\M\to\L\oplus\M$:
\begin{equation}
\alpha(x,m)=(\alpha_\L(x),\alpha_\M(m)),
\end{equation}
and the bracket:
\begin{equation}
[(x,m),(x',m')]_{\L\rtimes\M}=([x,x']_\L,[m,m']_\M+[x,m']+[m, x']),
\end{equation}
for all $m,m' \in \M, x,x' \in \L $.

\begin{lemma}\label{TH2}
The vector space $\L\oplus\M$ equipped with the above maps and bracket is a  Hom-Leibniz-Rinehart algebra if and only if the conditions (S11)-(S32) hold.
This is called a semi-direct product of Hom-Leibniz-Rinehart algebras of $\L$ and $\M$, denoted by $\L\rtimes\M$.
\end{lemma}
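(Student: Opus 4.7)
The proof is an \emph{if and only if} verification, term by term, of the ten defining axioms (H01), (H11)--(H23), (H31)--(H42) of a Hom-Leibniz-Rinehart algebra for the quadruple $(\L\oplus\M,[\cdot,\cdot]_{\L\rtimes\M},\alpha,\rho^L,\rho^R)$ constructed above. The idea is to plug generic elements $X=(x,m),\ Y=(y,n),\ Z=(z,p)$ of $\L\oplus\M$ into each axiom, expand the brackets using the defining formula, and project the resulting identity onto its $\L$-component and its $\M$-component. In each case the $\L$-component reduces to the corresponding axiom for $\L$, which holds by assumption; the $\M$-component is where the action axioms (S11)--(S32), together with the Hom-Leibniz algebra axioms for $\M$, must conspire.

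Several of the axioms are essentially automatic. Because $\rho^L$ and $\rho^R$ on $\L\rtimes\M$ are pulled back from $\L$ along the projection $\L\oplus\M\twoheadrightarrow\L$, the identities (H11)--(H23) and (H41)--(H42) for $\L\rtimes\M$ are \emph{literally} the ones for $\L$, and the value of the $\M$-component of the argument plays no role. The axiom (H01) splits into the one for $\L$ together with $\alpha_\M(fm)=\varphi(f)\alpha_\M(m)$, which is part of the standing hypothesis that $\M$ is a Hom-Leibniz $\A$-algebra. Thus the genuine work is concentrated in the Hom-Jacobi identity and in the Leibniz-over-$\A$ relations (H31)--(H32).

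Expanding $[\alpha(X),[Y,Z]_{\L\rtimes\M}]_{\L\rtimes\M}=[[X,Y]_{\L\rtimes\M},\alpha(Z)]_{\L\rtimes\M}+[\alpha(Y),[X,Z]_{\L\rtimes\M}]_{\L\rtimes\M}$ and reading off the $\M$-component, I would sort terms according to the pattern of zero/nonzero entries among $m,n,p$: the purely $\M$ part is exactly the Hom-Jacobi identity for $(\M,[\cdot,\cdot]_\M,\alpha_\M)$; the three subcases in which exactly one of $m,n,p$ is nonzero reproduce the three Hom-Leibniz action axioms (A11)--(A13) of Definition~2.5 (packaged into (S11)); and the three subcases in which exactly two of $m,n,p$ are nonzero reproduce (A21)--(A23). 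Likewise (H31) and (H32) applied to mixed arguments $X=(x,m)$ and $fY=(fy,fn)$ split, component by component, into (H31)--(H32) for $\L$ together with the four conditions (S21)--(S32), using also the multiplicativity relations (A31)--(A32).

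The \textbf{only if} direction is obtained by running the same decomposition in reverse: substituting elements of the special forms $(x,0)$ and $(0,m)$ into each of the already-established Hom-Leibniz-Rinehart axioms for $\L\rtimes\M$ isolates exactly one action-theoretic identity at a time, yielding in turn each of (S11)--(S32). The main obstacle is purely bookkeeping in the Hom-Jacobi identity, where one must organise eight subcases indexed by the pattern of zeros in $(m,n,p)$ and match the surviving $\M$-valued terms against the correct axioms among (A11)--(A23); once this table is in place, the remaining verifications are routine.
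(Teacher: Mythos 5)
Your proposal is correct and follows essentially the same direct, axiom-by-axiom verification that the paper carries out; if anything your outline is more complete, since the paper's printed proof only displays the checks for (H01), (H11), the anchor--bracket compatibility, (H31) and (H41), and omits both the Hom-Jacobi identity and the converse with the remark that the details are similar. The one bookkeeping point to watch is that, because the mixed terms of the bracket carry $\alpha_\L(x)$ rather than $x$, your zero-pattern subcases of the Hom-Jacobi identity yield (A11)--(A23) only with $\alpha_\L$-shifted arguments, so recovering the action axioms verbatim in the ``only if'' direction relies on the standing assumption that $\alpha_\L$ is regular.
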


\begin{proof}
By direct calculations, we will only verify the conditions (H01), (H11), (H21), (H31) and (H41) hold.
For conditions (H01) and (H11), we have
\begin{eqnarray*}
\alpha(f(x,m))
&=&\alpha(fx,fm)\\
&=&(\alpha_\L(fx),\alpha_\M(fm))\\
&=&(\varphi(f)\alpha_\L(x),\varphi(f)\alpha_\M(m))\\
&=&\varphi(f)(\alpha_\L(x),\alpha_\M(m))\\
&=&\varphi(f)\alpha(x,m),
\end{eqnarray*}
\begin{eqnarray*}
\rho^{L}_\L(\alpha(x,m))\varphi(f)
&=&\rho^{L}_\L(\alpha_\L(x),\alpha_\M(m))\varphi(f)\\
&=&\rho^{L}_\L(\alpha_\L(x))\varphi(f)\\
&=&\varphi(\rho^{L}_\L(x)(f))\\
&=&\varphi(\rho^{L}_\L(x,m)(f)),
\end{eqnarray*}
For condition (H21), we have
\begin{eqnarray*}
&&\rho^{L}_\L([(x,m),(x',m')])\varphi(f)\\
&=&\rho^{L}_\L([x,x']_\L, [m,m']_\M+[x,m']+[m,x')\varphi(f)\\
&=&\rho^{L}_\L([x,x']_\L)\varphi(f)\\
&=&\rho^{L}_\L(\alpha_\L(x))(\rho^{L}_\L(x')(f))-\rho^{L}_\L(\alpha_\L(x'))(\rho^{L}_\L(x)(f))\\
&=&\rho^{L}_\L(\alpha_\L(x),\alpha_\M(m))(\rho^{L}_\L(x',m')(f))-\rho^{L}_\L(\alpha_\L(x'),\alpha_\M(m'))(\rho^{L}_\L(x,m)(f))\\
&=&\rho^{L}_\L(\alpha(x,m))(\rho^{L}_\L(x',m')(f))-\rho^{L}_\L(\alpha(x',m'))(\rho^{L}_\L(x,m)(f)),
\end{eqnarray*}
For conditions (H31), we get
\begin{eqnarray*}
&&[(x,m),f(x',m')]\\
&=&[(x,m),(fx',fm')]\\
&=&([x,fx']_\L, [m,fm']_\M+[x,fm']+[m,fx'])\\
&=&\big(\varphi(f)[x,x']_\L+\rho^{L}_\L(x)(f)\alpha_\L(x'),\\
&&\ \ \ \varphi(f)[m,m']_\M+\varphi(f)[\alpha_\L(x),m']+\rho^{L}_\L(\alpha_\L(x))(f)\alpha_\M(m')\\
&&\ \ \ +\varphi(f)[m,\alpha_\L(x')]\big)\\
&=&\big(\varphi(f)[x,x']_\L,\varphi(f)[m,m']_\M+\varphi(f)[\alpha_\L(x),m']+\varphi(f)[m,\alpha_\L(x')]\big)\\
&&+\big(\rho^{L}_\L(x)(f)\alpha_\L(x'),\rho^{L}_\L(\alpha_\L(x))(f)\alpha_\M(m')\big)\\
&=&\varphi(f)[(x,m),(x',m')]+\big(\rho^{L}_\L(x)(f)\alpha_\L(x'),\rho^{L}_\L(x)(f)\alpha_\M(m')\big)\\
&=&\varphi(f)[(x,m),(x',m')]+\rho^{L}_\L(x,m)(f)\alpha(x',m').
\end{eqnarray*}
For conditions (H41), we obtain
\begin{eqnarray*}
\rho^{L}_\L(f(x,m))=\rho^{L}_\L(fx,fm)=\rho^{L}_\L(fx)=\varphi(f)\rho^{L}_\L(x)=\varphi(f)\rho^{L}_\L(x,m).
\end{eqnarray*}
Thus $\L\rtimes\M$ is indeed a Hom-Leibniz-Rinehart algebra. The converse is also true, we omit the details.
\end{proof}

Next we define crossed modules for Hom-Leibniz-Rinehart algebras. We will give an equivalent condition using homomorphism between semi-direct product.

\begin{definition}\label{CRO} 
A crossed module $\partial:\M\rightarrow\L$ of Hom-Leibniz-Rinehart algebras over $(\A,\varphi)$ consists of a Hom-Leibniz-Rinehart algebra
$(\L,[\cdot,\cdot]_\L, \alpha_\L,\rho^{L}_\L,\rho^{R}_{\L})$, a Hom-Leibniz $\A$-algebra $(\M,[\cdot,\cdot]_\M,\alpha_\M)$ together with the action of $\L$ on $\M$ such that the following identities hold:
\begin{enumerate}
\item[(CM0)] $\alpha_\L\circ\partial(m)=\partial\circ\alpha_\M(m),$
\item[(CM1)] $\partial[x,m]=[x,\partial m]_\L,\partial[m,x]=[\partial m,x]_\L,$
\item[(CM2)] $[\partial(m), n]=[m,n]_\M=[m,\partial(n)],$
\item[(CM3)] $\partial(fm)=f\partial(m),$
\item[(CM4)] $\rho^{L}_\L(\partial(m))(f)=0=\rho^{R}_{\L}(\partial(m))(f)$,
\end{enumerate}
\noindent where $m, n\in\M,x\in\L,f\in\A.$

\end{definition}

From the first three conditions above, we see that $\partial:\M\rightarrow\L$ is a crossed module of Hom-Leibniz algebras,
the condition (CM3) means that $\partial$ is a homomorphism of $\A$-modules and the condition
(CM4) shows that the composition of the following maps are zero:
$$\M \xrightarrow{\partial} \L \xrightarrow{\rho^{L}_\L} Der(\A),\quad\M \xrightarrow{\partial} \L \xrightarrow{\rho^{R}_{\L}} Der(\A).$$

\begin{theorem}\label{TH2}
Let $(\L,[\cdot,\cdot]_\L, \alpha_\L,\rho^{L}_\L,\rho^{R}_{\L})$ be a Hom-Leibniz-Rinehart algebra
and $(\M,$\ $[\cdot,\cdot]_\M$,$\alpha_\M)$ be a Hom-Leibniz $\A$-algebra.
Then  a homomorphism $\partial:\M\rightarrow\L$ is a crossed module of Hom-Leibniz-Rinehart algebras
if and only if the maps $(\id, \partial) : \L\rtimes\M \to \L\rtimes\L$ and $(\partial,\id) : \M\rtimes\M \to \L\rtimes\M$ are
homomorphisms of Hom-Leibniz-Rinehart algebras.
\end{theorem}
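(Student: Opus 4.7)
The plan is to translate both directions of the biconditional by unpacking Definition~\ref{morphism1} on the two semi-direct products. Recall that a morphism of Hom-Leibniz-Rinehart algebras must simultaneously (i) be $\A$-linear, (ii) commute with the Hom endomorphism, (iii) preserve brackets, and (iv) intertwine the left and right anchors. For each of the two maps in the statement I would read off what each of these four requirements forces on $\partial$, and then check that the collected conditions match exactly (CM0)--(CM4).

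First I would handle $(\id,\partial):\L\rtimes\M\to\L\rtimes\L$. Requirement (i), applied to the $\M$-slot, collapses to $\partial(fm)=f\partial(m)$, which is (CM3). Requirement (ii) reduces to $\partial\circ\alpha_\M=\alpha_\L\circ\partial$, which is (CM0). For (iii), I would expand both sides of
\[
(\id,\partial)\big[(x,m),(x',m')\big]_{\L\rtimes\M}=\big[(x,\partial m),(x',\partial m')\big]_{\L\rtimes\L}
\]
using the semi-direct bracket formula. The $\L$-slots match trivially; comparing the $\M$-slots yields $\partial[x,m]=[x,\partial m]_\L$ together with its right-handed analogue, which is (CM1), and the auxiliary identity $\partial[m,m']_\M=[\partial m,\partial m']_\L$. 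Requirement (iv) is automatic because the anchors of the semi-direct product ignore the $\M$-summand.

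Second I would handle $(\partial,\id):\M\rtimes\M\to\L\rtimes\M$, with $\M$ regarded as a Hom-Leibniz-Rinehart algebra with vanishing anchors acting on itself by its own bracket. Requirements (i) and (ii) repeat (CM3) and (CM0). For (iii), comparing the $\M$-slots of both sides produces $[\alpha_\M(m),n']_\M=[\alpha_\L(\partial m),n']$ and its mirror image; sliding $\partial$ past $\alpha$ by (CM0) and using the regularity of $\alpha_\M$, these identities reduce to $[\partial m,n]=[m,n]_\M$ and $[n,\partial m]=[n,m]_\M$, which is precisely (CM2). Requirement (iv) forces $\rho^L_\L(\partial m)=0=\rho^R_\L(\partial m)$ since $\M\rtimes\M$ has trivial anchors, which is (CM4).

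For the converse, assuming (CM0)--(CM4), each of the four morphism requirements for both maps follows by direct substitution into the semi-direct bracket formula; the only cross-check worth highlighting is that $\partial$ is automatically a Hom-Leibniz homomorphism, which I would deduce from (CM1) and (CM2) via $[\partial m,\partial m']_\L=\partial[m,\partial m']=\partial[m,m']_\M$. I expect the main bookkeeping obstacle to lie in the $\M$-slot of the semi-direct bracket, which mixes the $\L$-action on $\M$, the internal bracket of $\M$, and the Hom endomorphisms; aligning both sides of the bracket-preservation equation for each map requires invoking (CM0) to commute $\partial$ with the $\alpha$'s and (CM2) to convert the $\L$-action on $\M$ into the internal bracket of $\M$. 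Once these substitutions are carried out, the five crossed-module axioms correspond one-to-one with the homomorphism conditions for the pair of maps.
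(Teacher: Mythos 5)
Your proposal is correct and follows essentially the same route as the paper: unpack the four homomorphism requirements (\,$\A$-linearity, compatibility with $\alpha$, bracket preservation, anchor preservation\,) for the two maps $(\id,\partial)$ and $(\partial,\id)$ and match them one-to-one with (CM0)--(CM4). If anything, you are slightly more careful than the printed proof in tracking the $\alpha$'s in the semi-direct bracket (invoking regularity of $\alpha_\M$ to extract (CM2)) and in noting that the auxiliary identity $\partial[m,m']_\M=[\partial m,\partial m']_\L$ follows from (CM1) and (CM2).
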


\begin{proof}
To see when the maps $(\id, \partial) : \L\rtimes\M \to \L\rtimes\L$ and $(\partial,\id) : \M\rtimes\M \to \L\rtimes\M$ are
homomorphisms of Hom-Leibniz-Rinehart algebras, we first check that
\begin{eqnarray*}
(\id, \partial)\circ\alpha_{\L\rtimes\M}(x,m)&=&(\id, \partial)(\alpha_\L(x),\alpha_\M(m))\\
&=&(\alpha_\L(x),\partial\alpha_\M(m))=(\alpha_\L(x),\alpha_\L\partial(m))\\
\alpha_{\L\rtimes\L}\circ(\id, \partial)(x,m)&=&\alpha_{\L\rtimes\L}(x,\partial(m))=(\alpha_\L(x),\alpha_\L\partial(m))
\end{eqnarray*}
thus $(\id, \partial)\circ\alpha_{\L\rtimes\M}=\alpha_{\L\rtimes\L}\circ(\id, \partial)$ if and only if $\alpha_\L\circ\partial(m)=\partial\circ\alpha_\M(m)$.
This is condition (CM0).
On the other hand, we calculate
\begin{eqnarray*}
{(\id, \partial) [(x, m), (x', m')]} &=& ([x,x']_\L, \partial( [x, m '])+\partial([m, x'] )+ \partial[m, m']) \\
{[(\id, \partial)(x, m), (\id, \partial)(x', m')]}&
=&([x,x']_\L, [x,  \partial (m')]+[\partial(m),x']
+[\partial(m), \partial(m')])
\end{eqnarray*}
thus the above two formula are equal to each other if and only if $\partial[x,m]=[x,\partial m]_\L$ and $\partial[m,x]=[\partial m,x]_\L$ hold for all $x\in\L, m\in \M$.
\begin{eqnarray*}
{(\partial,\id) [(m, n), (m', n')] } &=& (\partial[m, m'], [m, n'] + [n, m'] + [n, n'] )\\
{[(\partial,\id)(m, n), (\partial,\id) (m', n')]} &=& ([\partial (m), \partial (m')],[\partial(m),n']+[n, \partial(m')]+ [n, n'])
\end{eqnarray*}
thus the above two formula are equal to each other if and only if $[\partial (m), n]=[m,n]_\M=[ m, \partial(n)]$ hold for all $m, n\in \M$.
From above we obtained conditions (CM1) and (CM2).

Since the maps $(\id, \partial) : \L\rtimes\M \to \L\rtimes\L$ and $(\partial,\id) : \M\rtimes\M \to \L\rtimes\M$ are
homomorphisms of $\A$-modules, we get condition (CM3).
Note that the left anchor of $\L\rtimes\M$ and $\L\rtimes\L$ are given by $\rho^{L}_\L(x,m)=\rho^{L}_\L(x)$ and $\rho^{L}_\L(x,y)=\rho^{L}_\L(x)$ respectively.
Then we obtain
$$\rho^{L}_\L((\partial,\id) (m, n))=\rho^{L}_\L(\partial(m),n)=\rho^{L}_\L(\partial(m))=\rho^{L}_\L(m,n).$$
Thus the left anchor of $\M\rtimes\M$ is preserved by $(\partial,\id)$ if and only if $\rho^{L}_\L(\partial(m))(f)=0$ for all $f\in \A$ since the left anchor of $\M\rtimes\M$ is zero.
 Similarly, we can prove the case of the right anchor. This is condition (CM4).
\end{proof}

\begin{definition}\label{morphism01} 
Let  $\partial:\M\rightarrow\L$ and $\partial':\M'\rightarrow\L'$ be two crossed modules of Hom-Leibniz-Rinehart algebras over $(\A,\varphi)$.
Then a homomorphism between them is a pair $(\Phi,\Psi)$ of morphism $\Phi:\M\rightarrow\M'$  and  $\Psi:\L\rightarrow\L'$
satisfying the following additional conditions:

$$\Psi\circ\partial=\partial'\circ\Phi,\quad \Phi([x,m])=[\Psi(x),\Phi(m)],\quad\Phi([m,x])=[\Phi(m),\Psi(x)].$$
\end{definition}

The category of crossed modules of Hom-Leibniz-Rinehart algebras will be denoted by $\mathbf{CM}$.

\section{Cat$^1$-Hom-Leibniz-Rinehart algebra}

In this section, we introduce the concept of Cat$^1$-Hom-Leibniz-Rinehart algebras.
We prove that there is a category equivalence
between the category of crossed modules of Hom-Leibniz-Rinehart algebras and the category of
cat$^1$-Hom-Leibniz-Rinehart algebras.

\begin{definition}\label{DEF22} 
A cat$^1$-Hom-Leibniz-Rinehart algebra $\C=(i;s,t:\P\to\L)$ has source Hom-Leibniz-Rinehart algebra $(\P,[\cdot,\cdot]_\P,\alpha_\P)$, range Hom-Leibniz-Rinehart algebra $(\L,[\cdot,\cdot]_\L,\alpha_\L)$, and two homomorphisms $s,t:\P\to\L$ and an embedding $i:\L\to\P$ satisfying the following conditions:
\begin{enumerate}
\item[(Cat1)] $s\circ i\circ t=t\circ\alpha,\  t\circ i\circ s=s\circ\alpha,$
\item[(Cat2)] $[\ker s,\ker t]=[\ker t,\ker s]=0,$
\item[(Cat3)] $t\circ\xi(fx)=\varphi(f)t\circ\xi(x),$
\item[(Cat4)] $\rho^L\circ \xi\circ s(x)(f)=0=\rho^R\circ \xi\circ s(x)(f).$
\end{enumerate}
where $\ker s,\ker t$ are kernel of $s, t$ and $\xi:\L\to\P$ is a linear section of $s$:
$s\circ\xi(x)=\alpha_\L(x),$ for all $x\in\L.$
\end{definition}

Let  $\C=(i;s,t:\P\to\L)$ and $\C'=(i';s',t':\P'\to\L')$ be two cat$^1$-Hom-Leibniz-Rinehart algebras.
Then a homomorphism between them is a homomorphism of Hom-Leibniz-Rinehart algebras $\Upsilon:\P\rightarrow\P'$
satisfying the following additional conditions:
$$\Upsilon(\L)\subseteq\L',\quad s'\Upsilon=\Upsilon|_{\mathcal{L}}s, \quad t'\Upsilon=\Upsilon|_{\mathcal{L}}t.$$

The category of crossed modules of Hom-Leibniz-Rinehart algebras will be denoted by $\mathbf{Cat}$.

\begin{theorem}
Let $\partial:\M\to\L$ be a crossed module of Hom-Leibniz-Rinehart algebras over $(\A,\varphi)$. Then we can obtain a cat$^1$-Hom-Leibniz-Rinehart algebra $(i;s,t:\L\rtimes\M\to\L)$,
where the structural homomorphisms $s,t$ and the embedding $i$ are given by:
$$s(x,m)=\alpha_\L(x),\ \ t(x,m)=\alpha_\L(x)+\partial (\alpha_\M(m)),\ \  i(x)=(x,0).$$
The existence of  $\xi$ is ensured by $$\xi (x)=(x,Dx)$$ for a derivation $D:\L\to\M$ in the crossed module of Hom-Leibniz-Rinehart algebras.
\end{theorem}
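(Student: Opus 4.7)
The plan is to verify one at a time that each piece of the proposed cat$^1$-Hom-Leibniz-Rinehart structure $(i;s,t:\L\rtimes\M\to\L)$ fulfils its defining property. I would organise the verification into three stages: the morphism properties of $s,t,i$; the structural axioms (Cat1)--(Cat4); and the construction of the section $\xi$.

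First I would check that $s,t$ are morphisms of Hom-Leibniz-Rinehart algebras and that $i$ is an injective such morphism. For $s(x,m)=\alpha_\L(x)$, compatibility with the semi-direct bracket of the preceding lemma reduces to multiplicativity of $\alpha_\L$, while compatibility with the anchors is built into $\rho^L_\L(x,m)=\rho^L_\L(x)$ (and similarly $\rho^R_\L$). For $t(x,m)=\alpha_\L(x)+\partial\alpha_\M(m)$, the nontrivial point is bracket compatibility: expanding both sides, the $\M$-valued part of $[(x,m),(x',m')]_{\L\rtimes\M}$ must be absorbed into $\L$ via $\partial$, for which one uses (CM1) to move $\partial$ through the mixed brackets $[\alpha_\L(x),m']$ and $[m,\alpha_\L(x')]$, together with the derived identity $\partial[m,m']_\M=[\partial m,\partial m']_\L$ (a consequence of (CM1) and (CM2)); (CM0) is used to commute $\alpha_\L$ with $\partial$. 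Injectivity and the morphism property of $i$ are immediate since only the $\L$-component is involved.

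Second, I would verify the four axioms. (Cat1) is a direct computation: using (CM0) one finds $(s\circ i\circ t)(x,m)=\alpha_\L^2(x)+\partial\alpha_\M^2(m)=(t\circ\alpha)(x,m)$ and $(t\circ i\circ s)(x,m)=\alpha_\L^2(x)=(s\circ\alpha)(x,m)$. For (Cat2) I would first identify the kernels: regularity of $\alpha_\L$ gives $\ker s=\{0\}\oplus\M$, and combining (CM0) with the regularity of $\alpha_\L$ gives $\ker t=\{(-\partial m,m):m\in\M\}$. Expanding $[(0,n),(-\partial m,m)]_{\L\rtimes\M}$, the $\L$-component vanishes automatically; in the $\M$-component I would rewrite $\alpha_\L(-\partial m)=-\partial\alpha_\M(m)$ via (CM0) and then apply Peiffer's identity (CM2) to collapse the remaining action terms against $[n,m]_\M$. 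The bracket $[\ker t,\ker s]$ is handled symmetrically.

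Finally, for the section I would note that $\xi(x)=(x,Dx)$ satisfies $s\circ\xi(x)=\alpha_\L(x)$ for any linear $D$, so even the trivial choice $D=0$ (which gives $\xi=i$) suffices to produce a section. For such $\xi$, one verifies (Cat3) from (H01), (CM3), and the $\A$-linearity of $D$, while (Cat4) is a consequence of (CM4) through the vanishing of $\rho^L_\L$ and $\rho^R_\L$ on the image of $\partial$, once (CM0) is used to pass $\alpha_\M$ across $\partial$. The main obstacle I expect is (Cat2): the Hom-twist in the semi-direct bracket forces one to shuffle $\alpha_\L$ across $\partial$ via (CM0) before Peiffer's identity (CM2) can eliminate the excess terms, and the regularity of $\alpha_\L$ is indispensable both for the clean parametrization of $\ker t$ and for the cancellation in the second component of the bracket.
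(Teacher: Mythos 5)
Your overall strategy --- verify that $s,t,i$ are morphisms, check (Cat1)--(Cat4), and exhibit the section $\xi$ --- is the same as the paper's, and your handling of the bracket compatibility of $t$ via the derived identity $\partial[m,m']_\M=[\partial m,\partial m']_\L$ (from (CM1) and (CM2)) is cleaner than what the paper writes. The genuine problem is your verification of (Cat2). With your (correct) parametrizations $\ker s=0\oplus\M$ and $\ker t=\{(-\partial m,m):m\in\M\}$, the semi-direct bracket gives
$$[(0,n),(-\partial m,m)]_{\L\rtimes\M}=\bigl(0,\ [n,m]_\M+[n,\alpha_\L(-\partial m)]\bigr)=\bigl(0,\ [n,m]_\M-[n,\partial\alpha_\M(m)]\bigr),$$
and (CM2) turns the last term into $[n,\alpha_\M(m)]_\M$, not $[n,m]_\M$. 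The $\M$-component is therefore $[n,m]_\M-[n,\alpha_\M(m)]_\M$, which has no reason to vanish: the collapse you assert via Peiffer's identity does not occur. The paper only closes this step by writing $\ker t=\{(-\partial m',\alpha_\M(m'))\}$, for which the cancellation is formal; but $t(-\partial m',\alpha_\M(m'))=\partial\alpha_\M^2(m')-\partial\alpha_\M(m')$ is not zero in general, so those pairs are not actually kernel elements. Either way the step does not close as described; it would require a twisted Peiffer identity, or a different definition of $t$ (for instance $t(x,m)=\alpha_\L(x)+\partial(m)$ makes your computation cancel exactly), and you should flag this rather than assert the cancellation.

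A secondary issue is your claim that the trivial choice $D=0$ suffices. With $\xi=i$ one gets $\rho^{L}\circ\xi\circ s(x,m)=\rho^{L}_\L(\alpha_\L(x))$, which is not zero for a general Hom-Leibniz-Rinehart algebra $\L$, so (Cat4) fails for $D=0$. The paper's own check of (Cat4) silently uses $s(x,Dx)=\partial(Dx)$, i.e.\ it imposes $\alpha_\L(x)=\partial(Dx)$ on the ``derivation'' $D$; whatever the intended hypothesis on $D$ is, it is strictly stronger than linearity and is not satisfied by $D=0$ unless $\alpha_\L=0$. You should state explicitly which properties of $D$ you invoke: at minimum $D(fx)=fD(x)$ for (Cat3), plus whatever compatibility with $\partial$ and the anchors is needed to make (Cat4) hold.
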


\begin{proof} Firstly, we need to verify that $s$ and $t$ are Hom-Leibniz algebra homomorphisms. Since
\begin{eqnarray*}
s([(x,m),(x',m')]_{\L\rtimes\M})
&=&s([x,x']_\L,[m,m']_\M+[\alpha_\L(x),m']+[m,\alpha_\L(x')])\\
&=&\alpha_\L([x,x']_\L),
\end{eqnarray*}
\begin{eqnarray*}
[s(x,m),s(x',m')]_\L
&=&[\alpha_\L(x),\alpha_\L(x')]_\L.
\end{eqnarray*}
Then we get
$$s[(x,m),(x',m')]_{\L\rtimes\M}=[s(x,m),s(x',m')]_\L.$$
Since
$$s(\alpha(x,m))=s(\alpha_\L (x),\alpha_\M (m))=\alpha_\L(\alpha_\L (x)),$$
$$\alpha_\L(s(x,m))=\alpha_\L(\alpha_\L (x)),$$
we have $$s(\alpha(x,m))=\alpha_\L(s(x,m)).$$
Thus  we obtain that $s$ is a Hom-Leibniz algebra homomorphism.

For $t$, since
\begin{eqnarray*}
&&t([(x,m),(x',m')]_{\L\rtimes\M})\\
&=&t([x,x']_\L,[m,m']_\M+[\alpha_\L(x),m']+[m,\alpha_\L(x')])\\
&=&\alpha_\L([x,x']_\L)+\partial(\alpha_\M([m,m']_\M))\\
&&+\partial(\alpha_\M([\alpha_\L(x),m']))
+\partial(\alpha_\M([m,\alpha_\L(x')]))\\
&=&\alpha_\L([x,x']_\L)+\partial(\alpha_\M([m,m']_\M))\\
&&+\partial[\alpha_\L(x),\alpha_\M(m')]
+\partial[\alpha_\M(m),\alpha_\L(x')]\\
&=&\alpha_\L([x,x']_\L)+\partial(\alpha_\M([m,m']_\M))\\
&&+[\alpha_\L(x),\partial(\alpha_\M(m'))]_\L+[\partial(\alpha_\M(m)),\alpha_\L(x')]_\L,\ by\ (CM1)
\end{eqnarray*}
\begin{eqnarray*}
&&[t(x,m),t(x',m')]_\L\\
&=&[\alpha_\L(x)+\partial(\alpha_\M(m)),\alpha_\L(x')+\partial(\alpha_\M(m'))]_\L\\
&=&[\alpha_\L(x),\alpha_\L(x')]_\L+[\alpha_\L(x),\partial(\alpha_\M(m'))]_\L\\
&&+[\partial(\alpha_\M(m)),\alpha_\L(x')]_\L+[\partial(\alpha_\M(m)),\partial(\alpha_\M(m'))]_\L,
\end{eqnarray*}
Then we have $$t([(x,m),(x',m')]_{\L\rtimes\M})=[t(x,m),t(x',m')]_\L.$$
Since
$$t(\alpha(x,m))=t(\alpha_\L(x),\alpha_\M(m))=\alpha_\L(\alpha_\L(x))+\partial(\alpha_\M(\alpha_\M(m))),$$
\begin{eqnarray*}
\alpha_\L(t(x,m))
&=&\alpha_\L(\alpha_\L(x)+\partial(\alpha_\M(m)))\\
&=&\alpha_\L(\alpha_\L(x))+\alpha_\L(\partial(\alpha_\M(m)))\\
&=&\alpha_\L(\alpha_\L(x))+\partial(\alpha_\M(\alpha_\M(m))),
\end{eqnarray*}
Then we get $$t(\alpha(x,m))=\alpha_\L(t(x,m)).$$
Thus we obtain that $t$ is a Hom-Leibniz algebra homomorphism.

Secondly, we need to verify that the axioms in the Definition \ref{DEF22} are satisfied.

(Cat1):
\begin{eqnarray*}
s\circ i\circ t(x,m)
&=&s\circ (\alpha_\L(x)+\partial(\alpha_\M(m)))\\
&=&s(\alpha_\L(x)+\partial(\alpha_\M(m)),0)\\
&=&\alpha_\L(\alpha_\L(x)+\partial(\alpha_\M(m)))\\
&=&\alpha_\L(\alpha_\L(x))+\partial(\alpha_\M(\alpha_\M(m)))\\
&=&t(\alpha_\L(x),\alpha_\M(m))\\
&=&t\circ\alpha(x,m),
\end{eqnarray*}
\begin{eqnarray*}
t\circ i\circ s(x,m)
&=&t\circ i(\alpha_\L(x))\\
&=&t(\alpha_\L(x),0)\\
&=&\alpha_\L(\alpha_\L(x))+\partial(\alpha_\M(0))\\
&=&\alpha_\L(\alpha_\L(x))\\
&=&s(\alpha_\L(x),\alpha_\M(m))\\
&=&s\circ\alpha(x,m).
\end{eqnarray*}
Hence (Cat1) is satisfied.

(Cat2): By definition
$$\ker s=\{(0,\alpha_\M(m))|m\in\M\},\quad\ker t=\{(-\partial m',\alpha_\M(m'))|m\in\M\}$$
Assume $v\in \ker s, w\in \ker t,$
 then
\begin{eqnarray*}
&&[v,w]\\
&=&[(0,\alpha_\M(m)),(-\partial m',\alpha_\L(m'))]_{\L\rtimes\M}\\
&=&([0,-\partial m']_\L,[\alpha_\M(m),\alpha_\M(m')]_\M+[\alpha_\L(0),\alpha_\M(m')]+[\alpha_\M(m),\alpha_\L(-\partial m')])\\
&=&(0,[\alpha_\M(m),\alpha_\M(m')]_\M-[\alpha_\M(m),\alpha_\L(\partial m')])\\
&=&(0,[\alpha_\M(m),\alpha_\M(m')]_\M-[\alpha_\M(m),\partial\circ\alpha_\M(m')]) \\
&=&(0,[\alpha_\M(m),\alpha_\M(m')]_\M-[\alpha_\M(m),\alpha_\M(m')]_\M) \\
&=&0.
\end{eqnarray*}
That is $[\ker s,\ker t]=0$. Similarly, one can prove that $[\ker t,\ker s=0$. Thus (Cat2) is satisfied.

(Cat3):
\begin{eqnarray*}
t\circ\xi(fx)
&=&t\big(fx,D(fx)\big)\\
&=&\alpha_\L(fx)+\partial(\alpha_\M(Dfx))\\
&=&\varphi(f)\alpha_\L(x)+\partial(\varphi(f)\alpha_\M(Dx))\\
&=&\varphi(f)\alpha_\L(x)+\varphi(f)\partial(\alpha_\M(Dx))\\
&=&\varphi(f)\big(\alpha_\L(x)+\partial(\alpha_\M(Dx))\big)\\
&=&\varphi(f)t(x,Dx)\\
&=&\varphi(f)t\circ\xi(x).
\end{eqnarray*}

(Cat4): By section axiom and condition (CM4), the composition of the following maps is zero
$$\rho_\L\circ s\circ\xi(x)(f)=\rho_\L(s(x,Dx))(f)=\rho_\L(\partial(Dx))(f)=0.$$
Thus we get (Cat4).

Conversely, given a cat$^1$-Hom-Leibniz-Rinehart algebra, let $\M=\ker s$ and $\partial:\M\to \L$ be the composition $\partial=t|_{\ker s}=t\circ i: \ker s\to\L\rtimes\M\to \L$.
By condition (Cat1), we get $s|_{\L}=t|_{\L}=\alpha_{\L}$.

(CM0):
\begin{eqnarray*}
\alpha_{\L} \circ \partial ( m)=s(\partial m,0)=s\circ i(\partial m)=s\circ i\circ t(0,m),
\end{eqnarray*}
\begin{eqnarray*}
\partial\circ\alpha_{\M}(m)=t(0,\alpha_{\M}(m))=t\circ \alpha(0,m).
\end{eqnarray*}
Hence (CM0) is satisfied.

(CM1): For $\forall x\in\L, \exists x'\in\L$, such that $\alpha_{\L}(x')=x$.
Then we have,
\begin{eqnarray*}
\partial[x,m]-[x,\partial m]_\L
&=&\partial[\alpha_\L(x'),m]-[\alpha_\L(x'),\partial m]_\L\\
&=&t(0,[\alpha_\L(x'),m])-[\alpha_\L(x'),t(0,m)]_\L\\
&=&t[(x',0),(0,m)]_{\L\rtimes\M}-[t(x',0),t(0,m)]_\L\\
&=&0,
\end{eqnarray*}
\begin{eqnarray*}
\partial[m,x]-[m,\partial x]_\L
&=&\partial[m,\alpha_\L(x')]-[\partial m,\alpha_\L(x')]_\L\\
&=&t(0,[m,\alpha_\L(x')])-[t(0,m),\alpha_\L(x')]_\L\\
&=&t[(0,m),(x',0)]_{\L\rtimes\M}-[t(0,m),t(x',0)]_\L\\
&=&0.
\end{eqnarray*}
(CM2):
\begin{eqnarray*}
(0,[\partial m,n]-[m,n]_\M)
&=&(0,[t(0,m),n])-(0,[m,n]_\M)\\
&=&[(\alpha_\L^{-1}(t(0,m)),0),(0,n)]_{\L\rtimes\M}-[(0,m),(0,n)]_{\L\rtimes\M}\\
&=&[(\alpha_\L^{-1}(t(0,m),0)-(0,m),(0,n)]_{\L\rtimes\M}.
\end{eqnarray*}
It is clear that $(\alpha_\L^{-1}(t(0,m),0)-(0,m)\in $Ker $t$, since $t(\alpha_\L^{-1}(t(0,m),0)=t(0,m)$. Therefore, due to (Cat2), we have that
\begin{eqnarray*}
[(\alpha_\L^{-1}(t(0,m),0)-(0,m),(0,n)]_{\L\rtimes\M}=0,
\end{eqnarray*}
\begin{eqnarray*}
((0,[m,\partial n]-[m,n]_\M)
&=&(0,[m,t(0,n)])-(0,[m,n]_\M)\\
&=&[(0,m),(\alpha_\L^{-1}(t(0,n)),0)]_{\L\rtimes\M}-[(0,m),(0,n)]_{\L\rtimes\M}\\
&=&[(0,m),(\alpha_\L^{-1}(t(0,n))-(0,n)]_{\L\rtimes\M}).
\end{eqnarray*}

(CM3): For $\forall m\in\M, \exists x\in\L$, such that $D(x)=m$.
Then we have,
\begin{eqnarray*}
\partial (fm)=\partial(\varphi(g)(Dx))=\partial(D(gx))=t(0,D(gx))=t|_\M \circ \xi(gx),
\end{eqnarray*}
\begin{eqnarray*}
f(\partial m)=ft(0,Dx)=\varphi(g)t|_\M \circ (x).
\end{eqnarray*}
Hence (CM3) is satified.

(CM4): By contion (Cat4), we get
\begin{eqnarray*}
\rho^L_\L(\partial(m))(f)=\rho^L_\L\circ t|_{Kers}(Dx)(f)=\rho^L_\L\circ\xi\circ s(x)(f)=0.
\end{eqnarray*}
The proof is completed.
\end{proof}

\begin{theorem}
There is a category equivalence between the category of crossed modules of Hom-Leibniz-Rinehart algebras and the category
cat$^1$-Hom-Leibniz-Rinehart algebras.
\end{theorem}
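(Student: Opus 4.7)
The plan is to promote the object-level constructions from the preceding theorem to a pair of functors $F:\mathbf{CM}\to\mathbf{Cat}$ and $G:\mathbf{Cat}\to\mathbf{CM}$ and then exhibit natural isomorphisms $GF\simeq\id_{\mathbf{CM}}$ and $FG\simeq\id_{\mathbf{Cat}}$. On objects, $F$ and $G$ are already given: $F$ sends $\partial:\M\to\L$ to $(i;s,t:\L\rtimes\M\to\L)$ with $s,t,i$ as in the previous theorem, and $G$ sends $(i;s,t:\P\to\L)$ to $(\partial=t|_{\ker s}:\ker s\to\L)$. So the bulk of the work is (a) defining each functor on morphisms, (b) checking functoriality, and (c) producing the two natural isomorphisms.

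For morphisms, given $(\Phi,\Psi)$ between crossed modules in the sense of Definition~\ref{morphism01}, I set $F(\Phi,\Psi):=\Psi\oplus\Phi:\L\rtimes\M\to\L'\rtimes\M'$, $(x,m)\mapsto(\Psi(x),\Phi(m))$. The action-equivariance conditions $\Phi([x,m])=[\Psi(x),\Phi(m)]$ and $\Phi([m,x])=[\Phi(m),\Psi(x)]$ together with $\Psi\circ\partial=\partial'\circ\Phi$ force $F(\Phi,\Psi)$ to commute with the brackets and with $s,t,i$, and $\A$-linearity is inherited componentwise. In the other direction, given $\Upsilon:\P\to\P'$ with $\Upsilon(\L)\subseteq\L'$, $s'\Upsilon=\Upsilon|_\L s$ and $t'\Upsilon=\Upsilon|_\L t$, the first compatibility forces $\Upsilon(\ker s)\subseteq\ker s'$, so I can set $G(\Upsilon):=(\Upsilon|_{\ker s},\Upsilon|_\L)$; the second gives $\Upsilon|_\L\circ\partial=\partial'\circ\Upsilon|_{\ker s}$, and the remaining conditions of Definition~\ref{morphism01} reduce to $\Upsilon$ being a Hom-Leibniz-Rinehart homomorphism restricted to subobjects. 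Functoriality of $F$ and $G$ is then immediate.

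For the natural isomorphism $GF\simeq\id_{\mathbf{CM}}$, unwinding gives $GF(\partial:\M\to\L)=(t|_{\ker s}:\ker s\to\L)$ where $\ker s=\{(0,\alpha_\M(m)):m\in\M\}$. The pair $(\eta^\M,\id_\L)$ with $\eta^\M(m):=(0,\alpha_\M(m))$ is a crossed-module isomorphism, using regularity of $\alpha_\M$ for bijectivity and the relation $t(0,\alpha_\M(m))=\partial(\alpha_\M(\alpha_\M(m)))$ together with $\partial\circ\alpha_\M=\alpha_\L\circ\partial$ for the square over $\partial$. Naturality in $(\Phi,\Psi)$ is a direct diagram chase.

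The main obstacle will be $FG\simeq\id_{\mathbf{Cat}}$: starting from $(i;s,t:\P\to\L)$ I must produce an isomorphism $\Theta:\L\rtimes\ker s\xrightarrow{\ \sim\ }\P$ of cat$^1$-Hom-Leibniz-Rinehart algebras. The candidate is $\Theta(x,m):=i(x)+m$, with inverse $p\mapsto(\alpha_\L^{-1}s(p),\,p-i\alpha_\L^{-1}s(p))$, the second summand lying in $\ker s$ by construction; regularity of $\alpha_\L$ is what makes this splitting well-defined. The essential verification is that this bijection respects the bracket: the cross terms $[i(x),m]_\P$ and $[m,i(x)]_\P$ for $m\in\ker s$ must coincide with the Hom-action of $\L$ on $\ker s$ used to define $\L\rtimes\ker s$ (which is forced by how $G$ transports the bracket of $\P$), while (Cat2) is exactly what is needed to kill the potential $[\ker s,\ker s]$ discrepancy and to match the semidirect-product bracket on the $\M$-component. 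Commutation of $\Theta$ with $s,t,i$ uses (Cat1), and preservation of the anchors is automatic since $\rho^L,\rho^R$ factor through $\L$. Naturality then follows from $s'\Upsilon=\Upsilon|_\L s$, $t'\Upsilon=\Upsilon|_\L t$ and $\A$-linearity of $\Upsilon$, completing the equivalence.
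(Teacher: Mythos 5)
Your overall strategy is the same as the paper's: the paper also defines the functor $\mathbf{CM}\to\mathbf{Cat}$ on morphisms by $(x,m)\mapsto(\Psi(x),\Phi(m))$ (it misprints this as $(\Phi(x),\Psi(m))$) and the inverse functor by restricting $\Upsilon$ to $\M=\ker s$ and to $\L$, and then simply asserts that these give an equivalence, deferring every remaining detail to an external reference. So you attempt strictly more than the paper records. However, the part you add contains a genuine gap. For the unit $GF\simeq\id_{\mathbf{CM}}$ you propose the pair $(\eta^{\M},\id_{\L})$ with $\eta^{\M}(m)=(0,\alpha_\M(m))$. A morphism of crossed modules with base component $\id_\L$ must satisfy $\id_\L\circ\partial=(t|_{\ker s})\circ\eta^{\M}$, but by your own computation
\[
t(0,\alpha_\M(m))=\partial(\alpha_\M^{2}(m))=\alpha_\L^{2}(\partial(m)),
\]
so the square commutes only if $\alpha_\L^{2}\circ\partial=\partial$, which fails for a general regular $\alpha_\L$. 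The identity $\partial\circ\alpha_\M=\alpha_\L\circ\partial$ does not rescue this; it only rewrites the obstruction. Repairing it forces you either to take the base component to be $\alpha_\L^{2}$ --- and then you must check that $\alpha_\L^{2}$ is admissible as a morphism in the sense of Definition~\ref{morphism1}, which is itself delicate because that definition demands $\rho^{L}_\L\circ\alpha_\L^{2}=\rho^{L}_\L$ while (H11) only yields $\rho^{L}_\L(\alpha_\L(x))\circ\varphi=\varphi\circ\rho^{L}_\L(x)$ --- or to redefine $\eta^{\M}(m)=(0,\alpha_\M^{-1}(m))$ so that $t\circ\eta^{\M}=\partial$ on the nose, and then re-verify equivariance for the $\L$-actions, which are themselves $\alpha$-twisted in the semidirect product bracket.

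The same twist resurfaces in your counit: on $\L\rtimes\ker s$ the target map is $t_{\L\rtimes\ker s}(x,m)=\alpha_\L(x)+\partial(\alpha_\P(m))$, whereas $t(\Theta(x,m))=t(i(x))+t(m)=\alpha_\L(x)+t(m)$, and these differ by one application of $\alpha_\L$ on the $\ker s$-component; so $\Theta(x,m)=i(x)+m$ does not strictly intertwine $t$ either. None of this is fatal --- regularity of the structure maps lets you absorb the discrepancies into the (iso)morphisms --- but as written your natural transformations are not morphisms of the categories you defined, and this is exactly the point at which the paper's ``details omitted'' genuinely requires care rather than a routine diagram chase.
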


\begin{proof} Given a homomorphism of crossed modules of Hom-Leibniz-Rinehart algebras $\Phi:\M\rightarrow\M'$ and $\Psi:\L\rightarrow\L'$,
the corresponding homomorphism of cat$^1$-Hom-Leibniz-Rinehart algebras is defined by
$$\Upsilon(x,m)=(\Phi(x),\Psi(m)).$$
Conversely,  given a homomorphism of cat$^1$-Hom-Leibniz-Rinehart algebras $\Upsilon:\L\rtimes\M\rightarrow\L'\rtimes\M'$
 the corresponding homomorphism of crossed modules of Hom-Leibniz-Rinehart algebras is give  by
 $\Phi:=\Upsilon|_{\M}$ and $\Psi:=\Upsilon|_{\L}$.
The proof that these two functors establish an equivalence between the two categories is similar as in \cite{Zhang1}.
The details is omitted.
\end{proof}

\section{Cohomology of Hom-Leibniz-Rinehart algebras}

\begin{definition}
Let $(\L, \alpha_\L,\rho^L_\L,\rho^R_\L)$ be a Hom-Leibniz-Rinehart algebra
and $(\M,\alpha_\M)$ be a representation of $\L.$
Consider the space of $(\A,\varphi)$-modules
$$
C^{*}(\L, \M):=\oplus_{n \geq 0} C^{n}(\L, \M)
$$
where $C^{n}(\L, \M) \subseteq \operatorname{Hom} (\otimes^{n}\L, \M )$ consisting of elements $\omega\in \operatorname{Hom} (\otimes^{n} \L, \M )$ satisfying conditions:
$$\omega (\alpha_\L (x_{1} ), \cdots, \alpha_\L (x_{n} ) )=\alpha_\M  (\omega (x_{1}, x_{2}, \cdots, x_{n} ) )$$
for all $x_{i} \in \L, 1 \leq i \leq n.$
 The coboundary map $\delta :  C^n  (\L, \M)  \rightarrow  C^{n+1}  (\L, \M)$ is given by
\begin{align*}
~&(\delta \omega) (x_1, \ldots, x_{n+1}) \\
=~& \sum_{i=1}^{n} (-1)^{i+1} [\alpha_{\L}^{n-1}(x_i), \omega (x_1, \ldots, \widehat{x_i}, \ldots, x_{n+1})] \\
~&+(-1)^{n+1} [\omega (x_1,\ldots, x_{n}),\alpha_{\L}^{n-1}(x_{n+1} )] \\
~&+ \sum_{1 \leq i <  j \leq n+1} (-1)^{i} \omega (\alpha_{\L}  (x_1), \ldots, \widehat{\alpha_{\L} ({x_i})}, \ldots, \alpha_{\L} (x_{j-1}), [x_i, x_j], \alpha_{\L} (x_{j+1}),\ldots,\alpha_{\L} ( x_{n+1})),
\end{align*}
for $\omega \in C^n  (\L, \M)$ and $x_1, \ldots, x_{n+1} \in \L.$
\end{definition}

\begin{proposition}
If $\omega \in C^{n}(\L, \M)$, then $\delta \omega\in C^{n+1}(\L, \M)$ and $\delta^{2}=0.$
\end{proposition}

\pf First, we need to check that
$$\delta \omega (\alpha_\L (x_{1} ), \alpha_\L (x_{2} ) \cdots, \alpha_\L (x_{n+1} ) )=\alpha_\M  (\delta \omega (x_{1}, x_{2}, \cdots, x_{n+1} ) )$$
 for all $x_{i} \in \L, 1 \leq i \leq n+1.$ We will use the fact that $\omega\circ \alpha_\L=\alpha_\M  \circ \omega$, $[\alpha_\L(x), \alpha_\M (m)]=\alpha_\M [x, m]$ and $[\alpha_\M (m), \alpha_\L(x)]=\alpha_\M [m, x]$ as $\omega\in C^{n}(\L, \M).$
\begin{eqnarray*}
&&\delta \omega (\alpha_\L (x_{1} ), \cdots, \alpha_\L (x_{n+1} ) )\\
&=& \sum_{i=1}^{n+1}(-1)^{i+1} [\alpha_\L^{n} (x_{i} ), \omega (\alpha_\L (x_{1} ), \cdots, \alpha_\L (\widehat{x}_{i} ), \cdots, \alpha_\L (x_{n+1} ) ) ] \\
&&+(-1)^{n+1} [\omega (\alpha_\L (x_{1} ), \cdots, \alpha_\L(x_{n})),\alpha_{\L}^{n}(x_{n+1})] \\
&&+ \sum_{1 \leq i <  j \leq n+1} (-1)^{i} \omega (\alpha^2_{\L}  (x_1), \ldots, \widehat{\alpha^2_{\L} ({x_i})}, \ldots, \alpha^2_{\L} (x_{j-1}),\\
 &&\qquad [x_i, x_j], \alpha^2_{\L} (x_{j+1}),\ldots,\alpha^2_{\L} ( x_{n+1}))\\
&=& \sum_{i=1}^{n}(-1)^{i+1} [\alpha_\L (\alpha_\L^{n-1} (x_{i} ) ), \alpha_\M  (\omega (x_{1}, \cdots, \widehat{x_{i}}, \cdots, x_{n+1} ) ) ]\\
&&+(-1)^{n+1} [\alpha_\M\omega (x_{1}, \cdots, x_{n}),\alpha_{\L} (\alpha_{\L}^{n-1}(x_{n+1}) )] \\
&&+ \sum_{1 \leq i <  j \leq n+1} (-1)^{i}\alpha_\M  (\omega (\alpha_{\L}  (x_1), \ldots, \widehat{\alpha_{\L} ({x_i})}, \ldots, \alpha_{\L} (x_{j-1}), \\
&&\qquad [x_i, x_j], \alpha_{\L} (x_{j+1}),\ldots,\alpha_{\L} ( x_{n+1}))) \\
&=& \alpha_\M  (\delta \omega (x_{1}, x_{2}, \cdots, x_{n+1} ) ).
\end{eqnarray*}
Further, $\delta^{2}=0$ follows from the direct but a long calculation.
\qed

By the above proposition, we obtain a cochain complex. The resulting cohomology of the
cochain complex is called the cohomology space of Hom-Leibniz-Rinehart algebra, and we denote this cohomology as $H^{}(\L,\M).$

We will use this cohomology when we consider extensions of a Hom-Leibniz-Rinehart algebras.
The second cohomology group $H^2(\L, \M)$ can be described as the equivalence classes of abelian extensions of Hom-Leibniz-Rinehart algebra.
%
\begin{definition}
A short exact sequence
\[
\xymatrix@C=0.5cm{
  0 \ar[r] & \M \ar[r]^{i} & \L' \ar[r]^{\pi} & \L \ar[r] & 0 }
\]
is called an abelian extension of the Hom-Leibniz-Rinehart algebra $\L$ by $\M$ if
\begin{enumerate}
\item[(E1)]$[i(m), x]=i([m,\pi(x)]),$
\item[(E2)]$[x,i(m)]=i([\pi(x), m]),$
\item[(E3)]$[i(m),i(n)]=0=[i(n),i(m)],$
\end{enumerate}
for all $m,n \in \M, x \in \L'.$
\end{definition}
Next, we will show that the second cohomology space $H^{2}(\L, \M)$ of a Hom-Leibniz-Rinehart algebra $(\mathcal{L}, \alpha_{\L})$ with coefficients in $(\M, \alpha_\M )$ classifies $\A$-split abelian extensions of $(\mathcal{L}, \alpha_{\L})$ by $(\M, \alpha_\M ).$

This result generalises the well-known classification theorems for the classical cases of a Lie algebras [8] and Lie-Rinehart algebras [9].

\begin{lemma}
Let $h$ be a representative of the cohomology class $[h] \in H^{2}(\L, \M).$ Consider a Hom-Leibniz-Rinehart algebra $ (\mathcal{L}^{\prime}, \alpha_{\L'} ):= (\L^{\prime},[-,-]^{\prime}, \varphi, \alpha_{\L'}, \rho^{L}_{\L'},\rho^{R}_{\L'} )$, where the structure constraints are given as follows:

(1) $\L^{\prime}=\L \oplus \M$, a direct sum of $\A$-modules;

(2) $[(x, m),(y, n)]^{\prime}=([x, y],[x, n]+[m,y]+h(x, y))$;

(3) $\alpha_{\L'}((x, m))=(\alpha_{\L}(x), \alpha_\M (m))$;

(4) $\rho^{L}_{\L'}(x, m)=\rho^{L}_{\L}(x)=\rho^{L}_{\L}(\pi(x, m))$,
$\rho^{R}_{\L'}(x, m)=\rho^{R}_{\L}(x)=\rho^{R}_{\L}(\pi(x, m))$\\
for all $x, y \in \L, m, n \in \M$ and $\pi: \L^{\prime}  \rightarrow \L$ defined as $\pi(x, m)=x.$ Then
\[
\xymatrix@C=0.5cm{
  0 \ar[r] & (\mathcal{M}, \alpha_\M ) \ar[r]^{i} & (\L', \alpha_{\L'}) \ar[r]^{\pi} & (\mathcal{L}, \alpha_{\L}) \ar[r] & 0 }
\]
is an $\A$-split abelian extension of $(\mathcal{L}, \alpha_{\L})$ by $(\M, \alpha_\M )$, where $i: \M  \rightarrow \L^{\prime}$ is defined by $i(m)=(0, m).$
\end{lemma}
\pf
Firstly, we need to verify that $(\L', \alpha_{\L'})$ is a Hom-Leibniz-Rinehart algebra. By direct calculations, we will only verify the conditions (H01), (H11), (H21), (H31) and (H41) hold.
(H01):
$$
\begin{aligned}
\alpha_{\L'}(f(x,m))
=&\alpha_{\L'}(fx,fm)
=(\alpha_{\L}(fx),\alpha_{\M}(fm))\\
=&(\varphi(f)\alpha_{\L}(x),\varphi(f)\alpha_{\M}(x))
=\varphi(f)\alpha_{\L'}(x,m).
\end{aligned}
$$
(H11):
$$
\begin{aligned}
\rho^{L}_{\L'}(\alpha_{\L'}(x,m))\circ\varphi(f)
=&\rho^{L}_{\L'}(\alpha_{\L}(x),\alpha_{\M}(m))\circ\varphi(f)
=\rho^{L}_{\L}(\alpha_{\L}(x))\circ\varphi(f)\\
=&\varphi\circ\rho^{L}_{\L}(x)(f)
=\varphi\circ\rho^{L}_{\L'}(x,m)(f).
\end{aligned}
$$
(H21):
\begin{eqnarray*}
&&\rho^{R}_{\L'}(\alpha_{\L'}(y,n))\circ\rho^{R}_{\L'}(x,m)(f)\\
&=&\rho^{R}_{\L'}(\alpha_{\L}(y),\alpha_{\M}(n))\circ\rho^{R}_{\L'}(x,m)(f)\\
&=&\rho^{R}_{\L}(\alpha_{\L}(y))\circ\rho^{R}_{\L}(x)(f)\\
&=&-\rho^{R}_{\L}(\alpha_{\L}(y))\circ\rho^{L}_{\L}(x)(f)\\
&=&-\rho^{R}_{\L'}(\alpha_{\L}(y),\alpha_{\M}(n))\circ\rho^{L}_{\L'}(x,m)(f)\\
&=&-\rho^{R}_{\L'}(\alpha_{\L'}(y,n))\circ\rho^{L}_{\L'}(x,m)(f).
\end{eqnarray*}
(H22):
\begin{eqnarray*}
&&\rho^{L}_{\L'}([(x,m),(y,n)]')\circ\varphi(f)\\
&=&\rho^{L}_{\L'}([x,y],[x,n]+[m,y]+h(x,y))\circ\varphi(f)\\
&=&\rho^{L}_{\L}([x,y])\circ\varphi(f)\\
&=&\rho^{L}_\L([x,y])\circ\varphi(f)\\
&=&\rho^{L}_\L(\alpha_\L(x))\circ\rho^{L}_\L(y)(f)-\rho^{L}_\L(\alpha_\L(y))\circ\rho^{L}_\L(x)(f)\\
&=&\rho^{L}_{\L'}(\alpha_\L(x),\alpha_\M(m))\circ\rho^{L}_{\L'}(y,n)(f)-\rho^{L}_{\L'}(\alpha_\L(y),\alpha_\M(n))\circ\rho^{L}_{\L'}(x,m)(f)\\
&=&\rho^{L}_{\L'}(\alpha_{\L'}(x,m))\circ\rho^{L}_{\L'}(y,n)(f)-\rho^{L}_{\L'}(\alpha_{\L'}(y,n))\circ\rho^{L}_{\L'}(x,m)(f).
\end{eqnarray*}
(H31):
\begin{eqnarray*}
&&[(x,m),f(y,n)]\\
&=&[(x,m),(fy,fn)]\\
&=&([x,fy],[x,fn]+[m,fy]+h(x,fy))\\
&=&(\varphi(f)[x, y]+\rho^{L}_\L(x)(f)\alpha_\L(y),\\
&&+\varphi(f)[x,n]+\rho^{L}_\L(x)(f)\alpha_\M(n)+\varphi(f)[m,y]+\varphi(f)h(x,y))\\
&=&(\varphi(f)[x, y],\varphi(f)[x, n]+\varphi(f)[m,y]+\varphi(f)h(x,y))\\
&&+(\rho^{L}_\L(x)(f)\alpha_\L(y),\rho^{L}_\L(x)(f)\alpha_\M(n))\\
&=&\varphi(f)([x, y],[x, n]+[m,y]+h(x,y))+\rho^{L}_\L(x)(f)(\alpha_\L(y),\alpha_\M(n))\\
&=&\varphi(f)([x, y],[x, n]+[m,y]+h(x,y))+\rho^{L}_{\L'}(x,m)(f)(\alpha_\L(y),\alpha_\M(n))\\
&=&\varphi(f)([x,m],[y,n])+\rho^{L}_{\L'}(x,m)(f)(\alpha_{\L'}(y,n)).
\end{eqnarray*}
(H41):
$$
\begin{aligned}
\rho^{L}_{\L'}(f(x,m))
=\rho^{L}_{\L'}(fx,fm)
=\rho^{L}_{\L}(fx)
=\varphi(f)\rho^{L}_{\L}(x)
=\varphi(f)\rho^{L}_{\L'}(x,m).
\end{aligned}
$$
Secondly, we need to verify that the axioms in the Definition 3.3 are satisfied.\\
(E1):
$$
\begin{aligned}\relax [i(m),i(n)]
=[(0,m),(0,n)]
=([0,0],[0,n]+[m,0]+f(0,0))
=0.
\end{aligned}
$$
(E2):
$$
\begin{aligned}\relax[i(m),(x,m)]
=&[(0,m),(x,m)]
=([0,x],[0,m]+[m,x]+f(0,x))\\
=&(0,[m,x])
=i([m,x])
=i([m,\pi(x,m)]).
\end{aligned}
$$
(E3):
$$
\begin{aligned}\relax [(x,m),i(m)]
=&[(x,m),(0,m)]
=([x,0],[m,0]+[x,m]+f(x,0))\\
=&(0,[x,m])
=i([x,m])
=i([\pi(x,m),m]).
\end{aligned}
$$
The proof is completed.
\qed

\begin{lemma}
For a cohomology class in $H^{2}(\L, \M)$ there is a unique equivalence class of $\A$-split abelian extensions of $(\mathcal{L}, \alpha_{\L})$ by $(\M, \alpha_\M ).$
\end{lemma}
\pf
Suppose we take an another representative $h^{\prime}$ of the cohomology class $[h] \in H^{2}(\L, \M)$ and get an extension $ (\mathcal{L}^{\prime \prime}, \alpha_{\L''} )$ as above. Since $h$ and $h^{\prime}$ represent the same cohomology class $[h]$, we have $h-h^{\prime}=\delta g$ for some $g \in C^{1}(\L, \M).$ Then the map $F: (\mathcal{L}^{\prime}, \alpha_{\L'} )  \rightarrow (\mathcal{L}^{\prime \prime}, \alpha_{\L''} )$ defined by $F(x, m)=(x, m+g(x))$ gives an isomorphism of the above extensions obtained by using $h$ and $h^{\prime}$ respectively. Thus for a cohomology class in $H^{2}(\L, \M)$ there is a unique equivalence class of $\A$-split abelian extensions of $(\mathcal{L}, \alpha_{\L})$ by $(\M, \alpha_\M ).$
\qed
\begin{theorem}
There is a one-to-one correspondence between the equivalence classes of $\A$-split abelian extensions of a Hom-Leibniz-Rinehart algebra
$(\mathcal{L}, \alpha_{\L})$ by $(\M, \alpha_\M )$ and the cohomology classes in $H^{2}(\L, \M).$
\end{theorem}

\pf
Let
\[
\xymatrix@C=0.5cm{
  0 \ar[r] & (\mathcal{M}, \alpha_\M ) \ar[r]^{i} & (\L', \alpha_{\L'}) \ar[r]^{\sigma} & (\mathcal{L}, \alpha_{\L}) \ar[r] & 0 }
\]
be an $\A$-split abelian extension of the Hom-Leibniz-Rinehart algebra $(\mathcal{L}, \alpha_{\L})$ by $(\M, \alpha_\M ).$ We will first show that we can define a 2-cocycle in $C^{2}(\L, \M)$ which is independent of a section for the map $\sigma.$

First, we fix a section $\tau: \L  \rightarrow \L'$ for the map $\sigma.$ Now consider the map $G: \L \oplus \M  \rightarrow \L'$ given by
$$
G(x, m)=\tau(x)+i(m) .
$$
Then it follows that $G$ is an injective $\A$-module homomorphism. In fact, $G$ is an isomorphism of $\A$-modules.

Define a 2-cochain $\Omega_{\tau} \in C^{2}(\L, \M)$ by the following expression;
$$
\Omega_{\tau}(x, y)=i^{-1}([\tau(x), \tau(y)]-\tau([x, y])) \text {, }
$$
for all $x, y \in \L.$ Here we have\\
(1) $\Omega_{\tau} \circ \alpha_{\L}=\alpha_\M  \circ \Omega_{\tau}$, which follows by the relations $\tau \circ \alpha_{\L}=\alpha_{\L'} \circ \tau$, and $\alpha_{\L'} \circ i=i \circ \alpha_\M $;\\
\begin{eqnarray*}
&&\Omega_{\tau}(\alpha_{\L}(x),\alpha_{\L}(y))\\
&=&i^{-1}([\tau(\alpha_{\L}(x)),\tau(\alpha_{\L}(y))]-\tau[\alpha_{\L}(x),\alpha_{\L}(y)])\\
&=&i^{-1}([\alpha_{\L'}(\tau(x)),\alpha_{\L'}(\tau(y))]-\alpha_{\L'}(\tau[x,y]))\\
&=&\alpha_{\M}(i^{-1}([\tau(x),\tau(y)]-\tau[x,y]))\\
&=&\alpha_{\M}(\Omega_{\tau}(x,y)).
\end{eqnarray*}
The claim is proved.\\
(2) $\delta (\Omega_{\tau} )=0$, which follows using Hom-Leibniz identity for $ (\L^{\prime}, \alpha_{\L'} ).$
\begin{eqnarray*}
&&\delta (\Omega_{\tau} )(x,y,z)\\
&=&[\alpha_{\L}^{2}(x),i^{-1}([\tau(y),\tau(z)]-\tau[y,z])]-[\alpha_{\L}^{2}(y),i^{-1}([\tau(x),\tau(z)]-\tau[x,z])]\\
&&-[i^{-1}([\tau(x),\tau(y)]-\tau[x,y]),\alpha_{\L}^{2}(z)]-i^{-1}([\tau[x,y],\tau(\alpha_{\L}(z))]-\tau[[x,y],\alpha_{\L}(z)])\\
&&-i^{-1}([\tau(\alpha_{\L}(y)),\tau[x,z]]-\tau[\alpha_{\L}(y),[x,z]])+i^{-1}([\tau(\alpha_{\L}(x)),\tau[y,z]]-\tau[\alpha_{\L}(x),[y,z]])\\
&=&[\alpha_{\L}^{2}(x),[i^{-1}\tau(y),i^{-1}\tau(z)]]-[\alpha_{\L}^{2}(x),i^{-1}\tau[y,z]]\\
&&-[\alpha_{\L}^{2}(y),[i^{-1}\tau(x),i^{-1}\tau(z)]]+[\alpha_{\L}^{2}(y),i^{-1}\tau[x,z]]\\
&&-[[i^{-1}\tau(x),i^{-1}\tau(y)],\alpha_{\L}^{2}(z)]+[i^{-1}\tau[x,y],\alpha_{\L}^{2}(z)]\\
&&-i^{-1}([\tau[x,y],\tau(\alpha_{\L}(z))]-\tau[[x,y],\alpha_{\L}(z)])\\
&&-i^{-1}([\tau(\alpha_{\L}(y)),\tau[x,z]]-\tau[\alpha_{\L}(y),[x,z]])\\
&&+i^{-1}([\tau(\alpha_{\L}(x)),\tau[y,z]]-\tau[\alpha_{\L}(x),[y,z]]).
\end{eqnarray*}
By using Hom-Leibniz identity, the claim is proved. Consequently, we get that $\Omega_{\tau}$ is a 2-cocycle in $C^{2}(\L, \M).$

Next we show that the class $[\Omega_{\tau} ] \in H^{2}(\L, \M)$ does not depend on the Hom-section $\tau.$ Suppose $\tau^{\prime}: \L \rightarrow \L^{\prime}$ is another Hom-section of $\sigma$ and let $\Omega_{\tau'}$ be the corresponding $2$-cocycle defined by using $\tau'$ instead of $\tau.$ Then the resulting 2-cocycle $\Omega_{\tau^{\prime}}$ is cohomologous to $\Omega_{\tau}$. This follows from the fact that $\Omega_{\tau^{\prime}}-\Omega_{\tau}=\delta (i^{-1} \circ (\tau^{\prime}-\tau ) )$ for $ (i^{-1} \circ (\tau^{\prime}-  \tau)) \in C^{1}(\L, \M)$. Thus for a given $\A$-split abelian extension of $(\mathcal{L}, \alpha_{\L})$ by $(\M, \alpha_\M )$, there exists a unique cohomology class $ [\Omega_{\tau} ] \in H^{2}(\L, \M)$.

One check that for two equivalent $\A$-split abelian extensions, the associated 2-cocycles are cohomologous.

Let
\[
\xymatrix@C=0.5cm{
  0 \ar[r] & (\mathcal{M}, \alpha_\M ) \ar[r]^{i'} & (\L'', \alpha_{\L''}) \ar[r]^{\sigma'} & (\mathcal{L}, \alpha_{\L}) \ar[r] & 0 }
\]
be another $\A$-split abelian extension of $(\mathcal{L}, \alpha_{\L})$ by $(\M, \alpha_{\M})$, and it is isomorphic to the extension:
\[
\xymatrix@C=0.5cm{
  0 \ar[r] & (\mathcal{M}, \alpha_\M ) \ar[r]^{i} & (\L', \alpha_{\L'}) \ar[r]^{\sigma} & (\mathcal{L}, \alpha_{\L}) \ar[r] & 0 }
\]
Suppose the map $\Phi: (\mathcal{L}^{\prime}, \alpha_{\L'}) \rightarrow (\mathcal{L}^{\prime \prime}, \alpha_{\L''})$ is an isomorphism of these extensions, that is the following diagram commutes:
\[\xymatrix{0 \ar[r] &
(\mathcal{M}, \alpha_\M ) \ar[r]^{i} \ar@{=}[d] &
(\L', \alpha_{\L'}) \ar[r]^{\sigma} \ar[d]^{\Phi}&
(\mathcal{L}, \alpha_{\L}) \ar[r] \ar@{=}[d]& 0 \\
0 \ar[r] & (\mathcal{M}, \alpha_\M ) \ar[r]^{i'} & (\L'', \alpha_{\L''}) \ar[r]^{\sigma'} & (\mathcal{L}, \alpha_{\L}) \ar[r] & 0
  }
\]

Now we will show that for a Hom-section $\tau:(\mathcal{L}, \alpha_{\L}) \rightarrow (\mathcal{L}^{\prime}, \alpha_{\L'})$ of $\sigma$ and $\tau^{\prime}:(\mathcal{L}, \alpha_{\L}) \rightarrow (\mathcal{L}^{\prime \prime}, \alpha_{\L''})$ of $\sigma^{\prime}$, the respective associated cocycles $\Omega_{\tau}$ and $\Omega_{\tau^{\prime}}$ are cohomologous. Consider, $\tau^{\prime \prime}=\Phi \circ \tau:(\mathcal{L}, \alpha_{\L}) \rightarrow$ $ (\mathcal{L}^{\prime \prime}, \alpha_{\L''})$ a Hom-section for $\sigma^{\prime}$. Then we have $\Omega_{\tau^{\prime \prime}}=\Omega_{\tau} .$ Here, $\Omega_{\tau^{\prime \prime}}$ and $\Omega_{\tau^{\prime}}$ are cohomologous in $H^{2}(\L, \M)$ because of the fact that $\tau^{\prime}$ and $\tau^{\prime \prime}$ both are Hom-sections of $\sigma^{\prime}$. Therefore, $\Omega_{\tau}$ and $\Omega_{\tau^{\prime}}$ are cohomologous in $H^{2}(\L, \M)$.

With Lemma $3.4$ and Lemma $3.5$ this proof is completed.
\qed

\section{$\alpha$-crossed module extensions of Hom-Leibniz-Rinehart algebras}
In this section, we introduce the concept of $\alpha$-crossed module of Hom-Leibniz-Rinehart algebras.
The goal of this section is to find the relationship between the third cohomology group of Hom-Leibniz-Rinehart algebras and the $\alpha$-crossed module extensions of Hom-Leibniz-Rinehart algebras.

\begin{definition} 
An $\alpha$-crossed module $\partial:(\V,\alpha_{\V})\rightarrow(\L,\alpha_{\L})$ of Hom-Leibniz-Rinehart algebras over $(\A,\varphi)$ consists of a Hom-Leibniz-Rinehart algebra
$(\L,\alpha_{\L})$, a Hom-Leibniz-$\A$-algebra $(\V,\alpha_\V)$ together with the Hom-action of $\L$ on $\V$ such that the following identities hold:
\begin{enumerate}
\item[($\alpha$-CM0)] $\alpha_\L\circ\partial(v)=\partial\circ\alpha_\V(v),$
\item[($\alpha$-CM1)] $\partial[\alpha_\L (x),v]=[x,\partial v]_\L,\quad \partial[v,\alpha_\L (x)]=[\partial v, x]_\L,$
\item[($\alpha$-CM2)] $[\partial(\alpha_\V (v)), v']=[v,v']_\V=[v,\partial(\alpha_\V(v'))],$
\item[($\alpha$-CM3)] $\partial(fv)=f\partial(v),$
\item[($\alpha$-CM4)] $\rho_\L^{L}(\partial(v))(f)=0=\rho_\L^{R}(\partial(v))(f)$,
\end{enumerate}
\noindent where $v, v'\in \V,x\in\L,f\in\A.$
\end{definition}

 Obviously, an $\alpha$-crossed module is a crossed module  in the sense of \cite{Cas20} when $\alpha_{\L}=\text{id}_{\L}$ and $\alpha_{\V}=\text{id}_{\V}$.
Now for a Hom-Leibniz-Rinehart algebra $(\L,\alpha_{\L})$ and a Hom-Leibniz $\A$-algebra $(\V,\alpha_{\V})$ where $\L$ acts on it. We consider direct sum $\L\oplus\V$ as an $\A$-module: $f(x,v)=(fx,fv).$
Define the anchors $\rho^{L},\rho^{R}:\L\oplus\V\to \Der_\varphi(\A)$:
$$\rho^{L}(x,v)=\rho^{L}_\L(x),\quad\rho^{R}(x,v)=\rho^{R}_\L(x),$$
the map $\alpha:\L\oplus\V\to\L\oplus\V$:
$$\alpha(x,v)=(\alpha_\L(x),\alpha_\V(v)),$$
and the bracket:
$$[(x,v),(x',v')]=([x,x']_\L,[v,v']_\V+[\alpha_\L(x),v']+[v, \alpha_\L(x')]).$$
for all $v,v' \in \V, x,x' \in \L $.

\begin{lemma}
The vector space $\L\oplus\V$ equipped with the above maps and bracket is a  Hom-Leibniz-Rinehart algebra if and only if the conditions (S11)-(S32) hold.
This is called an $\alpha$-semi-direct product of Hom-Leibniz-Rinehart algebras of $\L$ and $\V$, denoted by $\L\rtimes_\alpha\V$.
\end{lemma}

\begin{definition} 
Let $(\V, \L, \partial)$ and $(\V', \L', \partial')$ be two $\alpha$-crossed modules. A homomorphism of $\alpha$-crossed modules is a pair of Hom-Leibniz-Rinehart algebra homomorphisms $(\lambda, \mu)$  such that $\lambda:
(\V, \alpha_{\V}) \rightarrow(\V', \alpha_{\V'})$ and $\mu:(\L, \alpha_{\L}) \rightarrow(\L', \alpha_{\L'})$ satisfying $\lambda [x,v] = [\mu(x), \lambda (v)] $  for all $x \in \L,~ v \in \V$ and the following diagram commute
\[
\xymatrix{
\V \ar[r]^{\partial} \ar[d]_{\lambda} & \L \ar[d]^{\mu} \\
\V' \ar[r]_{\partial'} & \L'.
}
\]

\end{definition}

Given an $\alpha$-crossed module $(\V , \L, \partial)$, we consider $\mathfrak{g} = \text{coker}(\partial)$ and $\M = \text{ker} (\partial)$. The Hom-Leibniz-Rinehart algebra structure on $\L$ induces a Hom-Leibniz-Rinehart algebra structure on $\mathfrak{g}$ by $[\pi(x), \pi(y)] = \pi [x,y]$, where $\pi : \L \rightarrow \mathfrak{g}$ is the projection map. Moreover, the Hom-action of $\L$ on $\V$ induces a Hom-action of $\mathfrak{g}$ on $\M$ via $[x, i(m)] = [\pi (x),m]$, for $x \in \L,m \in \M$. Hence an $\alpha$-crossed module yields an exact sequence
\begin{align*}
0 \rightarrow \M \xrightarrow{i} \V \xrightarrow{\partial} \L \xrightarrow{\pi} \mathfrak{g} \rightarrow 0.
\end{align*}
We call $(\V , \L, \partial )$ an $\alpha$-crossed module over the Hom-Leibniz-Rinehart algebra $\mathfrak{g}$ with kernel a Hom-$\mathfrak{g}$-module $\M$. In this case, we call such an exact sequence an $\alpha$-crossed module extension of $\g$ by $\M$.

Two $\alpha$-crossed modules
$(\V, \L, \partial)$ and $(\V', \L', \partial')$ over $\mathfrak{g}$ with kernel $\M$ are said to be equivalent if there is a morphism of $\alpha$-crossed modules $(\V, \L, \partial) \rightarrow (\V', \L', \partial')$ which induces identity maps on $\mathfrak{g}$ and $\M$. Denote $\text{Cross}(\mathfrak{g}, \M)$ the equivalence classes of $\alpha$-crossed module extensions.
In the next theorem we see the relation between classes of $\alpha$-crossed modules and the third cohomology.

\begin{theorem}\label{last-thm}
There is a canonical map
\begin{align*}
\psi : \text{Cross}(\mathfrak{g}, \M) \rightarrow H^3  (\mathfrak{g}, \M).
\end{align*}
\end{theorem}

\pf Let

\begin{align*}
\E:0 \rightarrow \M \xrightarrow{i} \V \xrightarrow{\partial} \L \xrightarrow{\pi} \g\rightarrow 0
\end{align*}
be an $\alpha$-crossed module extension. Choose Hom-$\A$-linear sections $s : \mathfrak{g} \rightarrow \L$ with $\pi s = \text{id}_{\g}$ and $q : \text{Im}(\partial) \rightarrow \V$ with $\partial q = \text{id}_{\L}$. Since $\pi$ is a Hom-Leibniz-Rinehart algebra homomorphism, for all $x , y \in \mathfrak{g},$ we have
\begin{align*}
\pi ([s(x), s(y)] - s[x,y])&=[\pi s(x), \pi  s(y)] - \pi s[x,y]\\
&=[x,y]-[x,y]= 0.
\end{align*}
This shows that $[s(x), s(y)] - s[x,y]\in\ker(\pi) = \text{Im} (\partial)$.
Take
$$g (x,y) = q ([s(x), s(y)] - s[x,y]) \in \V.$$
Define a map $\theta_{\mathcal{E}, s, q} : \mathfrak{g}^{\otimes 3} \rightarrow \M$ by
\begin{align*}
\theta_{\mathcal{E}, s, q} (x,y,z) =~ &[\alpha_{\L}^{2}s(x), g(y,z)] - [\alpha_{\L}^{2}s(y), g(x,z)] - [g (x,y),\alpha_{\L}^{2}s(z)]\\
~& + g (\alpha_{\g}(x),[y, z]) - g (\alpha_{\g}(y),[x,z] ) - g ([x,y],\alpha_{\g}(z)).
\end{align*}
Since $\partial$ is a map of Hom-$\L$-modules, it follows that
\begin{eqnarray*}
&&\partial\theta_{\mathcal{E}, s, q} (x,y,z)\\
 &= &\partial[\alpha_{\L}^{2}s(x), g(y,z)] - \partial[\alpha_{\L}^{2}s(y), g(x,z)] - \partial[g (x,y),\alpha_{\L}^{2}s(z)]\\
&& + \partial g (\alpha_{\g}(x),[y, z]) -\partial g (\alpha_{\g}(y),[x,z] ) -\partial g ([x,y],\alpha_{\g}(z)]).\\
&= &\partial\big[\alpha_{\L}^{2}s(x), q([s(y),s(z)]-s[y,z])\big
] + \partial q([ \alpha_{\L}s(x),s[y, z]]-s[\alpha_{\g}(x),[y, z]])\\
& &-\partial[\alpha_{\L}^{2}s(y), q([s(x),s(z)]-s[x,z])] - \partial q([ \alpha_{\L}s(y),s[x, z]]-s[\alpha_{\g}(y),[x, z]])\\
&&-\partial[ q([s(x),s(y)]-s[x,z]),\alpha_{\L}^{2}s(z)] - \partial q([ s[x, y],\alpha_{\L}s(z)]-s[[x, y],\alpha_{\g}(z)])\\
&=& [\alpha_{\L}s(x),  ([s(y),s(z)]-s[y,z])] +    ([ \alpha_{\L}s(x),s[y, z]]-s[\alpha_{\g}(x),[y, z]])\\
&&- [\alpha_{\L}s(y),  ([s(x),s(z)]-s[x,z])] -    ([ \alpha_{\L}s(y),s[x, z]]-s[\alpha_{\g}(y),[x, z]])\\
&&- [  ([s(x),s(y)]-s[x,z]),\alpha_{\L}s(z)] -   ([ s[x, y],\alpha_{\L}s(z)]-s[[x, y],\alpha_{\g}(z)]) \ \ by\ \  (\alpha\text-CM1)\\
&=& [\alpha_{\L}s(x),  [s(y),s(z)]]-[\alpha_{\L}s(x),s[y,z]] +   [  \alpha_{\L}s(x),s[y, z]]-s[\alpha_{\g}(x),[y, z]]\\
&&- [\alpha_{\L}s(y),  [s(x),s(z)]]+[\alpha_{\L}s(y),s[x,z])] -    [ \alpha_{\L}s(y),s[x, z]]+s[\alpha_{\g}(y),[x, z]]\\
&&- [  [s(x),s(y)],\alpha_{\L}s(z)]+[s[x,y],\alpha_{\L}s(z)] -   [ s[x, y],\alpha_{\L}s(z)]+s[[x, y],\alpha_{\g}(z)]\\
&=&[\alpha_{\L}s(x),  [s(y),s(z)]]- [\alpha_{\L}s(y),  [s(x),s(z)]]- [  [s(x),s(y)],\alpha_{\L}s(z)]\\
&&-s[\alpha_{\g}(x),[y, z]]+s[\alpha_{\g}(y),[x, z]]+s[[x, y],\alpha_{\g}(z)].
\end{eqnarray*}
By Hom-Leibniz identity, we obtain $\partial (\theta_{\mathcal{E}, s, q} (x,y,z)) = 0.$

Therefore, $\theta_{\mathcal{E}, s, q} (x,y,z) \in \ker (\partial) = \M$. Hence $\theta_{\mathcal{E}, s, q} : \wedge^3 \mathfrak{g} \rightarrow \M$ is well defined.

For condition
$\theta_{\mathcal{E}, s, q}
(\alpha_{\mathfrak{g}}(x),\alpha_{\mathfrak{g}}(y),\alpha_{\mathfrak{g}}(z))=\alpha_{\M}(\theta_{\mathcal{E}, s, q}(x,y,z))$, we first check that
\begin{eqnarray*}
 g(\alpha_{\mathfrak{g}}x,\alpha_{\mathfrak{g}}y)
&=&q ([s(\alpha_\mathfrak{g}x), s(\alpha_\mathfrak{g}y)] - s[\alpha_\mathfrak{g}x,\alpha_\mathfrak{g}y])\\
&=&q ([\alpha_\L s(x), \alpha_\L s(y)] - \alpha_\L s[x,y])\\
&=&\alpha_\V (q([s(x),s(y)]-s[x,y]))\\
&=&\alpha_\V( g(x,y)).
\end{eqnarray*}
Then we can get
\begin{eqnarray*}
&&\theta_{\mathcal{E}, s, q}
(\alpha_{\mathfrak{g}}x,\alpha_{\mathfrak{g}}y,\alpha_{\mathfrak{g}}z)\\
&=&[s(\alpha^{3}_\mathfrak{g}x), g(\alpha_\mathfrak{g}y,\alpha_\mathfrak{g}z)] - [s(\alpha^{3}_\mathfrak{g}y), g(\alpha_\mathfrak{g}x,\alpha_\mathfrak{g}z)] - [g (\alpha_\mathfrak{g}x,\alpha_\mathfrak{g}y),s(\alpha^{3}_\mathfrak{g}z)]\\
&& + g (\alpha^{2}_\mathfrak{g}(x),[\alpha_\mathfrak{g}y, \alpha_\mathfrak{g}z]) - g (\alpha^{2}_\mathfrak{g}(y),[\alpha_\mathfrak{g}x,\alpha_\mathfrak{g}z] ) - g ([\alpha_\mathfrak{g}x,\alpha_\mathfrak{g}y],\alpha^{2}_\mathfrak{g}(z)])\\
&=&[\alpha^{3}_\L s(x), \alpha_{\V}g(y,z)] - [\alpha^{3}_ \L s(y),\alpha_{\V} g( x,z)] - [\alpha_{\V}g (x,y),\alpha^{3}_ \L s( z)]\\
&& + \alpha_{\V}g (\alpha_\mathfrak{g} (x),[y,  z]) -\alpha_{\V} g (\alpha_\mathfrak{g} (y),[ x,z] ) - \alpha_{\V}g ([ x,y],\alpha_ \mathfrak{g}(z)])\\
&=&\alpha_{\V}[\alpha_\L^{2} s(x), g(y,z)] - \alpha_{\V}[\alpha_ \L^{2} s(y), g( x,z)] - \alpha_{\V}[g (x,y),\alpha_ \L^{2} s( z)]\\
&& + \alpha_{\V}g (\alpha_\mathfrak{g} (x),[y,  z]) -\alpha_{\V} g (\alpha_\mathfrak{g} (y),[ x,z] ) - \alpha_{\V}g ([ x,y],\alpha_ \mathfrak{g}(z)])\\
&=&\alpha_{\V}(\theta_{\mathcal{E}, s, q}(x,y,z))\\
&=&\alpha_{\M}(\theta_{\mathcal{E}, s, q}(x,y,z)).
\end{eqnarray*}
Moreover, the map $\theta_{\mathcal{E}, s, q}$ is a $3$-cocycle ($\delta(\theta_{\mathcal{E}, s, q})=0$) in $H^3 (\mathfrak{g}, \M)$.

We next show that the class of $\theta_{\mathcal{E}, s, q}$ in $H^3 (\mathfrak{g}, \M)$ does not depend on the Hom-section $s$. Suppose $\overline{s} : \mathfrak{g} \rightarrow \L$ is another Hom-section of $\pi$ and let $\theta_{\mathcal{E}, \overline{s}, q}$ be the corresponding $3$-cocycle defined by using $\overline{s}$ instead of $s$. Then there exists a linear map $h : \mathfrak{g} \rightarrow \V$ with $s - \overline{s} = \partial h$. Observe that
\begin{eqnarray*}
&&[\alpha_{\L}^{2}s(x), g(y,z)] - [\alpha_{\L}^{2}\overline{s}(x) , \overline{g} (y,z)]\\
&=& [\alpha_{\L}^{2}s(x) - \alpha_{\L}^{2}\overline{s}(x), g (y,z)] + [ \alpha_{\L}^{2}\overline{s}(x), (g - \overline{g})(y,z)] \\
&=& [\alpha_{\L}^{2} \partial h (x) , q ([s(y), s(z)] - s[y,z]) ] + [ \alpha_{\L}^{2}\overline{s}(x), (g - \overline{g})(y,z)] \\
&=&  [\partial \alpha_{\V}^{2}h(x),q [s(y), s(z)] - s[y,z] ] + [\alpha_{\L}^{2} \overline{s}(x), (g - \overline{g})(y,z)]\\
&=&  [ \alpha_{\V}h(x),\alpha_{\L} [s(y), s(z)] - s[y,z] ] + [\alpha_{\L}^{2} \overline{s}(x), (g - \overline{g})(y,z)] \ \ by\ \  (\alpha\text-CM2).
\end{eqnarray*}
Therefore,
\begin{eqnarray*}
\notag&&(\theta_{\mathcal{E}, s, q} - \theta_{\mathcal{E}, \overline{s}, q}) (x,y,z) \\
\notag&= & [ \alpha_{\V}h(x),\alpha_{\L}([s(y), s(z)] - s[y,z]) ] - [ \alpha_{\V}h(y),\alpha_{\L}([s(x), s(z)] - s[x,z]) ]  \\
\notag&&- [ \alpha_{\L}([s(x), s(y)] - s[x,y]),\alpha_{\V} h(z) ]  + [ \alpha_{\L}^{2}\overline{s}(x), (g - \overline{g})(y,z)]   \\
\notag&& - [ \alpha_{\L}^{2}\overline{s}(y), (g - \overline{g})(x,z)] + [ (g - \overline{g})(x,y),\alpha_{\L}^{2}\overline{s}(z)]  \\
\label{eq0001}&&+ (g - \overline{g}) (\alpha_{\L}(x),[y,z]  ) - (g - \overline{g}) ( \alpha_{\L}(y),[x,z]) - (g - \overline{g}) ([x,y],\alpha_{\L}(z)).
\end{eqnarray*}
Define a map $b : \wedge^2 \mathfrak{g} \rightarrow \V$ by
\begin{align*}
b (x,y)
&=[\alpha_{\L}s(x), h(y)]+ [h(x),\alpha_{\L}s(y) ] - h([x,y]) - [h(x), h(y)].
\end{align*}
Then a easy calculation shows that  $\partial b = \partial (g - \overline{g})$.
\begin{eqnarray*}
 &&\partial (g - \overline{g})(x,y)\\
&=&\partial q([s(x),s(y)]-s[x,y])-\partial q([\overline s(x),\overline s(y)]-\overline s[x,y])\\
&=&[s(x),s(y)]-[\overline s(x),\overline s(y)]-(s[x,y]-\overline s[x,y])\\
&=&[s(x),s(y)-\overline s(y)]+[s(x)-\overline s(x),\overline s(y)]-\partial h[x,y]\\
&=&[s(x),s(y)-\overline s(y)]+[s(x)-\overline s(x), s(y)-\partial h(y)]-\partial h[x,y]\\
&=&[s(x),\partial h(y)]+[\partial h(x),s(y)]-[\partial h(x),\partial h(y)]-\partial h[x,y]\\
&=&\partial[\alpha_{\L}s(x), h(y)]+\partial[ h(x),\alpha_{\L}s(y)]-\partial[h(x), h(y)]-\partial h[x,y] \ \ by\ \  (\alpha\text-CM1)\\
&=&\partial b(x,y).
\end{eqnarray*}

Hence $(g - \overline{g} - b) : \wedge^2 \mathfrak{g} \rightarrow \M=\ker \partial$. Thus, we have
\begin{align*}
 &(\theta_{\mathcal{E}, s, q} - \theta_{\mathcal{E}, \overline{s}, q}) (x,y,z) \\
= ~& [\alpha_{\V} h(x),\alpha_{\L}([s(y), s(z)] - s[y,z]) ] - [\alpha_{\V} h(y),\alpha_{\L}([s(x), s(z)] - s[x,z]) ]  \\
~&- [ \alpha_{\L}([s(x), s(y)] - s[x,y]),\alpha_{\V} h(z) ]  + [ \alpha_{\L}^{2}\overline{s}(x), b(y,z)]   \\
~& - [ \alpha_{\L}^{2}\overline{s}(y), b(x,z)] + [ b(x,y),\alpha_{\L}^{2}\overline{s}(z)]  \\
~&+ b (\alpha_{\L}(x),[y,z]  ) - b ( \alpha_{\L}(y),[x,z]) - b ([x,y],\alpha_{\L}(z)) \\
~& + (\delta (g - \overline{g} - b)) (x,y,z).
\end{align*}

In the right hand side of the above equation, we substitute the definition of $b$ in last six terms. After many cancellations on the right hand side, we have
\begin{align*}
~& [\alpha_{\V} h(x),\alpha_{\L}([s(y), s(z)] - s[y,z]) ] - [\alpha_{\V} h(y),\alpha_{\L}([s(x), s(z)] - s[x,z]) ] & \\
~&- [ \alpha_{\L}([s(x), s(y)] - s[x,y]),\alpha_{\V} h(z) ]  + [ \alpha_{\L}^{2}\overline{s}(x), b(y,z)]  & \\
~& - [ \alpha_{\L}^{2}\overline{s}(y), b(x,z)] + [ b(x,y),\alpha_{\L}^{2}\overline{s}(z)]  & \\
~&+ b (\alpha_{\L}(x),[y,z]  ) - b ( \alpha_{\L}(y),[x,z]) - b ([x,y],\alpha_{\L}(z)) & \\
=~&[\alpha_{\V} h(x),\alpha_{\L}[s(y), s(z)]]-[\alpha_{\V}h(x),\alpha_{\L}s[y,z]]& \\
~&+[\alpha_{\L}^{2}s(x),b(y,z)]-[\alpha_{\V}h(x),b(y,z)]+b(\alpha_{\L}x,[y,z])& \\
~&-[\alpha_{\V} h(y),\alpha_{\L}[s(x), s(z)]]+[\alpha_{\V}h(y),\alpha_{\L}s[x,z]] & \\
~&-[\alpha_{\L}^{2}s(y),b(x,z)]+[\alpha_{\V}h(y),b(x,z)]-b(\alpha_{\L}y,[x,z]) & \\
~&-[\alpha_{\L}[s(x), s(y)] ,\alpha_{\V}h(z )]+[\alpha_{\L}s[x,y],\alpha_{\V}h(z )] & \\
~&-[b(x,y),\alpha_{\L}^{2}s(z )]+[b(x,y),\alpha_{\V}h(z )]-b([x,y],\alpha_{\L}z ) & \\
=~&[\alpha_{\V} h(x),\alpha_{\L}[s(y), s(z)]] - [\alpha_{\V} h(y),\alpha_{\L}[s(x), s(z)]] - [\alpha_{\L}[s(x), s(y)] ,\alpha_{\V}h(z )] & (c1)\\
~&-[\alpha_{\V}h(x),\alpha_{\L}s[y,z]]        + [\alpha_{\V}h(y),\alpha_{\L}s[x,z]]        + [\alpha_{\L}s[x,y],\alpha_{\V}h(z )] & (c2)\\
~&+[\alpha_{\L}^{2}s(x),[\alpha_{\L}s(y),h(z)]]   -[\alpha_{\L}^{2}s(y),[\alpha_{\L}s(x),h(z)]]    -[[\alpha_{\L}s(x),h(y),\alpha_{\L}^{2}s(z)]] & (c3)\\
~&+[\alpha_{\L}^{2}s(x),[h(y),\alpha_{\L}s(z)]]   -[\alpha_{\L}^{2}s(y),[h(x),\alpha_{\L}s(z)]]    -[[h(x),\alpha_{\L}s(y)],\alpha_{\L}^{2}s(z)] &  (c4)\\
~&-[\alpha_{\L}^{2}s(x),h[y,z]]                  +[\alpha_{\L}^{2}s(y),h[x,z]]                     +[h[x,y],\alpha_{\L}^{2}s(z)] & (c5)\\
~&-[\alpha_{\L}^{2}s(x),[h(y),h(z)]]             +[\alpha_{\L}^{2}s(y),[h(x),h(z)]]              +[[h(x),h(y),\alpha_{\L}^{2}s(z)]] &  (c6)\\
~&-[\alpha_{\V}h(x),[\alpha_{\L}s(y),h(z)]]       +[\alpha_{\V}h(y),[\alpha_{\L}s(x),h(z)]]        +[[\alpha_{\L}s(x),h(y)],\alpha_{\V}h(z)] & (c7)\\
~&-[\alpha_{\V}h(x),[h(y),\alpha_{\L}s(z)]]       +[\alpha_{\V}h(y),[h(x),\alpha_{\L}s(z)]]        +[[h(x),\alpha_{\L}s(y)],\alpha_{\V}h(z)] & (c8)\\
~&+[\alpha_{\V}h(x),h[y,z]]                      -[\alpha_{\V}h(y),h[x,z]]                       -[h[x,y],\alpha_{\V}h(z)] &  (c9)\\
~&+[\alpha_{\V}h(x),[h(y),h(z)]]                 -[\alpha_{\V}h(y),[h(x),h(z)]]                  -[[h(x),h(y)],\alpha_{\V}h(z)] &  (c10)\\
~&+[\alpha_{\L}^{2}s(x),h[y,z]]                  -[\alpha_{\L}^{2}s(y),h[x,z]]                   -[\alpha_{\L}s[x,y],\alpha_{\V}h(z)] &   (c11)\\
~&+[\alpha_{\V}h(x),\alpha_{\L}s[y,z]]            -[\alpha_{\V}h(y),\alpha_{\L}s[x,z]]             -[h[x,y],\alpha_{\L}^{2}s(z)] &   (c12)\\
~&-h[\alpha_{\L}x,[y,z]]                         +h[\alpha_{\L}y,[x,z]]                          +h[[x,y],\alpha_{\L}z] &  (c13)\\
~& -[\alpha_{\V}h(x),h[y,z]]                     +[\alpha_{\V}h(y),h[x,z]]                       +[h[x,y],\alpha_{\V}h(z)]. & (c14)
\end{align*}
By direct calculation, we have
\begin{align*}
~&(c9)+(c14)\\
=~&[\alpha_{\V}h(x),h[y,z]]                    -[\alpha_{\V}h(y),h[x,z]]                     -[h[x,y],\alpha_{\V}h(z)]\\
~& -[\alpha_{\V}h(x),h[y,z]]                     +[\alpha_{\V}h(y),h[x,z]]                       +[h[x,y],\alpha_{\V}h(z)]\\
=~&0
\end{align*}
and
\begin{align*}
~&(c2)+(c5)+(c11)+(c12)\\
=~&-[\alpha_{\V}h(x),\alpha_{\L}s[y,z]]        + [\alpha_{\V}h(y),\alpha_{\L}s[x,z]]        + [\alpha_{\L}s[x,y],\alpha_{\V}h(z )]\\
~&-[\alpha_{\L}^{2}s(x),h[y,z]]                +[\alpha_{\L}^{2}s(y),h[x,z]]                 +[h[x,y],\alpha_{\L}^{2}s(z)]\\
~&+[\alpha_{\L}^{2}s(x),h[y,z]]                  -[\alpha_{\L}^{2}s(y),h[x,z]]                   -[\alpha_{\L}s[x,y],\alpha_{\V}h(z)]\\
~&+[\alpha_{\V}h(x),\alpha_{\L}s[y,z]]            -[\alpha_{\V}h(y),\alpha_{\L}s[x,z]]             -[h[x,y],\alpha_{\L}^{2}s(z)]\\
=~&0.
\end{align*}
By Hom-Leibniz identity, we have
\begin{align*}
 (c10)=~&[\alpha_{\V}h(x),[h(y),h(z)]]                 -[\alpha_{\V}h(y),[h(x),h(z)]]                  -[[h(x),h(y)],\alpha_{\V}h(z)]=0;\\
(c13) =~&-h[\alpha_{\L}x,[y,z]]                         +h[\alpha_{\L}y,[x,z]]                          +h[[x,y],\alpha_{\L}z]=0.
 \end{align*}
By (A11)-(A13), we have
\begin{align*}
~&(c1)+(c3)+(c4)\\
=~&[\alpha_{\V} h(x),\alpha_{\L}[s(y), s(z)]] - [\alpha_{\V} h(y),\alpha_{\L}[s(x), s(z)]] - [\alpha_{\L}[s(x), s(y)] ,\alpha_{\V}h(z )]\\
~&+[\alpha_{\L}^{2}s(x),[\alpha_{\L}s(y),h(z)]]   -[\alpha_{\L}^{2}s(y),[\alpha_{\L}s(x),h(z)]]    -[[\alpha_{\L}s(x),h(y),\alpha_{\L}^{2}s(z)]]\\
~& +[\alpha_{\L}^{2}s(x),[h(y),\alpha_{\L}s(z)]]   -[\alpha_{\L}^{2}s(y),[h(x),\alpha_{\L}s(z)]]    -[[h(x),\alpha_{\L}s(y)],\alpha_{\L}^{2}s(z)]\\
=~&0.
\end{align*}
Similarly, we have
\begin{align*}
~&(c6)+(c7)+(c8)\\
=~&-[\alpha_{\L}^{2}s(x),[h(y),h(z)]]             +[\alpha_{\L}^{2}s(y),[h(x),h(z)]]              +[[h(x),h(y),\alpha_{\L}^{2}s(z)]]\\
~&-[\alpha_{\V}h(x),[\alpha_{\L}s(y),h(z)]]       +[\alpha_{\V}h(y),[\alpha_{\L}s(x),h(z)]]        +[[\alpha_{\L}s(x),h(y)],\alpha_{\V}h(z)]\\
~&-[\alpha_{\V}h(x),[h(y),\alpha_{\L}s(z)]]       +[\alpha_{\V}h(y),[h(x),\alpha_{\L}s(z)]]        +[[h(x),\alpha_{\L}s(y)],\alpha_{\V}h(z)]\\
=~&0.
\end{align*}
Thus we are only left with the term $(\delta (g - \overline{g} - b)) (x,y,z)$. Hence the class of $\theta_{\mathcal{E}, s, q}$ does not depend on the Hom-section $s$.

Next consider a map
\[
\xymatrix{
\mathcal{E}:= \quad  0 \ar[r] & \M \ar[r]^{i} \ar@{=}[d] & \V \ar[r]^{\partial} \ar[d]_{\lambda} & \L \ar[r]^{\pi} \ar[d]^{\mu} & \mathfrak{g} \ar[r] \ar@{=}[d] & 0 \\
\mathcal{E}' := \quad  0 \ar[r] & \M \ar[r]_{i'} & \V' \ar[r]_{\partial'} & \L' \ar[r]_{\pi'} & \mathfrak{g} \ar[r] & 0
}
\]
of $\alpha$-crossed modules. Let $s' : \mathfrak{g} \rightarrow L'$ and $q' : \text{Im} (\partial') \rightarrow V'$ be Hom-sections of $\pi'$ and $\partial'$, respectively.
Note that $(\pi' \mu s)(x) = (\pi s) (x) = x$, for all $x \in \mathfrak{g}$. Therefore, $\mu s : \mathfrak{g} \rightarrow L'$ is another Hom-section of $\pi'$. Thus, we have
\begin{align*}
&(\theta_{\mathcal{E}, s, q} - \theta_{\mathcal{E}', \mu s, q'} ) (x,y,z) \\
= ~&[\alpha_{\L}^{2}s(x), g(y,z)] - [\alpha_{\L}^{2}s(y), g(x,z)] - [g (x,y),\alpha_{\L}^{2}s(z)]\\
~& + g (\alpha_{\L}(x),[y, z]) - g (\alpha_{\L}(y),[x,z] ) - g ([x,y],\alpha_{\L}(z))\\
~&-[\alpha_{\L'}^{2}\mu s(x), g'(y,z)] + [\alpha_{\L'}^{2}\mu s(y), g'(x,z)] + [g' (x,y),\alpha_{\L}^{2}\mu s(z)]\\
~& - g' (\alpha_{\L}(x),[y, z]) + g' (\alpha_{\L}(y),[x,z] ) + g' ([x,y],\alpha_{\L}(z)), \nonumber
\end{align*}
where $g'(x,y) = q' ([\mu s (x), \mu s (y)] - \mu s [x,y])$. Here we have used the same notation $[\cdot,\cdot]$ to denote the action of $\L$ on $\V$ and the action on $\L'$ on $\V'$. Hence, we have
\begin{align*}
&(\theta_{\mathcal{E}, s, q} - \theta_{\mathcal{E}', \mu s, q'} ) (x,y,z) \\
=~& [\alpha_{\L'}^{2}\mu s (x), (\lambda q - q' \mu) ([s(y), s(z)] - s[y,z]) ]\\
~&- [\alpha_{\L'}^{2}\mu s (y), (\lambda q - q' \mu) ([s(x), s(z)] - s[x,z]) ] \\
~&- [(\lambda q - q' \mu) ([s(x), s(y)] - s[x,y]), \alpha_{\L'}^{2}\mu s (z) ]\\
~&+ (\lambda q - q' \mu) \big(  [ \alpha_{\L} s(x), s[y,z] ] - s[\alpha_{\L} x,[y,z]]  \big)\\
~&- (\lambda q - q' \mu) \big(  [ \alpha_{\L}s(y),   s[x,z]] - s[\alpha_{\L}y,[x,z]]  \big)\\
~&- (\lambda q - q' \mu) \big(  [  s[x,y], \alpha_{\L}s(z)] - s[[x,y],\alpha_{\L}z]  \big) .
\end{align*}
It follows from the above expression that $(\theta_{\mathcal{E}, s, q} - \theta_{\mathcal{E}', \mu s, q'} ) (x,y,z) = (\delta \phi)(x,y,z)$, where $\phi : \wedge^2 \mathfrak{g} \rightarrow M$ is defined by
\begin{eqnarray*}
\phi (x,y)=(\lambda q - q' \mu)([s(x), s(y)]-s ([x, y])).
\end{eqnarray*}
Hence, $[\theta_{\mathcal{E}, s, q}] = [\theta_{\mathcal{E}', \mu s, q'}]$ in $H^3  (\mathfrak{g}, M)$. Moreover, from the first part, we have $[\theta_{\mathcal{E}', \mu s, q'}] = [\theta_{\mathcal{E}',  s', q'}]$. Hence the class $[\theta_{\mathcal{E}, s, q}]$ does not depend on the Hom-sections $s$ and $q$. We denote the corresponding class by $[\theta_\mathcal{E}]$. Therefore, the map
\begin{align*}
 \psi : \text{Cross} (\mathfrak{g}, M) \rightarrow H^3  (\mathfrak{g}, M), ~ \mathcal{E} \rightarrow [\theta_\mathcal{E}]
\end{align*}
is well-defined.
\qed

One would like to ask whether the above map in Theorem \ref{last-thm} is bijective.
But even in the Hom-Lie algebra case, it is not easy to prove this map is a bijective map.
One of the obstacle is that we don't know whether the third cohomology group is trivial or not.
Thus this question will be left for further investigated.

\subsection*{Acknowledgements}
This research was supported by NSFC (11601219, 2016BAB211003).


\end{document}